\def\R{\mathbb{R}}
\def\E{\mathbb{E}}
\def\s{\sigma}
\def\g{\gamma}
\def\t{\tau}
\def\l{\lambda}
\def\S{\mathbb{S}}
\def\a{\alpha}
\def\b{\beta}
\def\la{\langle}
\def\ra{\rangle}
\def\arccosh{\text{arccosh}}
\def\k{\kappa}
\def\H{\mathbb{H}}
\newtheorem{theorem}{Theorem}[section]
\newtheorem{cor}{Corollary}[theorem]
\newtheorem{lemma}{Lemma}[section]
\theoremstyle{definition}
\theoremstyle{remark}
\newtheorem{remark}{Remark}[section]
\numberwithin{equation}{section}
\begin{document}

\title{Synthetic Geometry in Hyperbolic Simplices}

\author{Andrew Clickard}
\address{Department of Mathematical and Digital Sciences, Bloomsburg University, Bloomsburg, Pennsylvania 17815}
\email{ac24869@huskies.bloomu.edu}

\author{Barry Minemyer}
\address{Department of Mathematical and Digital Sciences, Bloomsburg University, Bloomsburg, Pennsylvania 17815}
\email{bminemyer@bloomu.edu}


\date{\today.}


\keywords{}

\begin{abstract}
Let $\t$ be an $n$-simplex and let $g$ be a metric on $\t$ with constant curvature $\k$.  The lengths that $g$ assigns to the edges of $\t$, along with the value of $\k$, uniquely determine all of the geometry of $(\t, g)$.  In this paper we focus on hyperbolic simplices ($\k = -1$) and develop geometric formulas which rely only on the edge lengths of $\t$.  Our main results are distance and projection formulas in hyperbolic simplices, as well as a projection formula in Euclidean simplices. We also provide analogous formulas in simplices with arbitrary constant curvature $\kappa$.

\end{abstract}

\maketitle

\section{Introduction}\label{Introduction}

If one fixes the length of each edge of a triangle, or more generally of an $n$-dimensional simplex, then the geometry of the triangle (simplex) is completely determined: no vertex can be moved without changing the length of some adjacent edge.  
Contrast this to a square, for example, where two adjacent vertices can be shifted in the same direction while not changing the lengths of any edges.  
Thus, if $\t$ is an $n$-simplex and $g$ is a metric on $\t$ with constant curvature, then the geometry of $(\t,g)$ is completely determined by just the edge lengths that $g$ assigns to $\t$.  
What we mean in the title by ``synthetic geometry" is formulas/procedures to compute geometric quantities of $(\t,g)$ using only the edge lengths of $\t$, and without needing to isometrically embed $(\t,g)$ into the appropriate model space and use explicit coordinates.   

The study of {\it Euclidean simplices}, or simplices $(\t,g)$ where $g$ has constant curvature zero, has a long history.  
Heron's formula, which computes the area of a triangle using its edge lengths, is an example of synthetic geometry in a Euclidean 2-simplex which dates back thousands of years.  
The study of higher dimensional simplices goes back (at least) to Menger in \cite{Menger} and Cayley in \cite{Cayley}.  
More recently, the second author in \cite{Minemyer3} developed a more efficient technique to encode the geometry of a Euclidean $n$-simplex using the edge lengths of the simplex (this formula is also developed in \cite{Rivin}).  
The techniques and results in \cite{Minemyer3} will be summarized in Section \ref{section:Euclidean}, as they will play a vital role in the work done in this paper.  

Hyperbolic simplices, or metric simplices $(\t,g)$ where $g$ has constant curvature -1, play an important role in the current mathematical zeitgeist.  
For example, distances in hyperbolic triangles considering only the edge lengths of that triangle are used to determine whether a geodesic metric space is CAT(-1) (or an Alexandrov space with curvature bounded below by $-1$).  
Hyperbolic structures are used by Charney and Davis in \cite{CD} for their strict hyperbolization, which are then further used by Ontaneda in \cite{Ontaneda} in his smooth Riemannian hyperbolization.  

The goal of this research is to develop synthetic geometric formulas for simplices with constant curvature. This, together with \cite{Minemyer3}, establishes a foundation on which further research may be more easily performed due to the formulas' relative simplicity in contrast to previous work in the literature.  
We will focus primarily on {\it hyperbolic simplices} due to their mathematical importance, but the last Section will discuss analogous results for {\it spherical simplices} (constant curvature 1), and more generally for simplices of constant curvature $\k$.  

The main results of this paper are as follows:  
\begin{enumerate}
    \item  We develop a simple criterion which determines whether or not a set of positive edge lengths for an $n$-simplex determine a legitimate hyperbolic simplex.  (Section \ref{section:realizability}, Theorem \ref{theorem : hyperbolic-realizability}).  
    
    \item  Given a hyperbolic simplex $(\t,g_{\H})$ and two points $x, y \in \t$, we determine an easy procedure to find $d_{\H}(x,y)$ using only the edge lengths of $\t$ and the barycentric coordinates of $x$ and $y$.  (Section \ref{section:distances}, Theorem \ref{thm:hyp dist}).  
    
    \item  Given a Euclidean $n$-simplex $(\t,g_{\E})$ where $\t = \la v_1, v_2, \hdots, v_n, v_{n+1} \ra$, we develop a formula for $\text{proj}_{\t_{n+1}}(v_{n+1})$, the orthogonal projection of $v_{n+1}$ onto the $(n-1)$-face opposite of it (denoted by $\t_{n+1}$).  (Section \ref{section:Euclidean proj}, Theorem \ref{thm:Euclid Projection}). 
    
    \item  Given a hyperbolic $n$-simplex $(\t,g_{\H})$ where $\t = \la v_1, v_2, \hdots, v_n, v_{n+1} \ra$, we develop a formula for $\text{proj}_{\t_{1}}^{\H}(v_{1})$.  (Section \ref{section:hyperbolic proj}, Theorem \ref{theorem:hyperbolic-projection}). 
\end{enumerate}

The goal of all of this work is to develop formulas which are simple to use and reasonably intuitive.  
Toward this, in Section \ref{section:example} we work out an example with a 3-simplex in which we use all of the formulas mentioned above (and some of the formulas in \cite{Minemyer3}).  
The hope is that this example will be useful for any researchers who wish to use these formulas in the future. 

Lastly, there are more difficult formulas in the literature for some of the quantities listed above.  
In \cite{KSY}, Karli\u{g}a, Savas, and Yakut give a formula for orthogonal projection in hyperbolic space (Theorem 3 in \cite{KSY}).  
As one can see, our formula is considerably simpler, and in any case their formula uses outward normal vectors and is therefore not a truly ``synthetic" formula.  
Also, in \cite{Karliga}, Karli\u{g}a provides necessary and sufficient conditions for when a collection of edge lengths yields a legitimate hyperbolic simplex.  
But again, one can see that our necessary and sufficient condition listed in Section \ref{section:realizability} is more natural, and simpler to use.  
Also, just before submitting this paper to the arXiv, Abrosimov and Vuong posted the article \cite{AV} which gives a geometric version for our Theorem \ref{theorem : hyperbolic-realizability} when $n=3$.

\section{Notation and Formulas in Euclidean Simplices}\label{section:Euclidean}

Let us first establish some notation for the remainder of the paper.  
Let $\t = \la v_1, v_2, \hdots, v_n, v_{n+1} \ra$ be an $n$-dimensional simplex, and let $g$ be a Riemannian metric on $\t$ with constant curvature.  
The notation $g_\E$ means that $g$ has constant curvature $0$, or is {\it Euclidean}; the notation $g_\H$ implies that $g$ has curvature $-1$, or is {\it hyperbolic}; and the notation $g_\mathbb{S}$ means that $g$ has curvature $1$, or is \textit{spherical}.  
Let $e_{ij}$ denote the edge of $\t$ adjacent to the vertices $v_i$ and $v_j$, and let $\g_{ij}$ denote the length of $e_{ij}$ under $g$, also denoted by $g(e_{ij})$.
We make the convention that $\g_{ii} = 0$ for all $i$.
Denote the determinant of a matrix $M$ by $|M|$, and let $M_{ij}$ denote the $ij$-th minor of $M$ (that is, $M_{ij}$ equals the determinant of the matrix obtained by removing the $i^{th}$ row and $j^{th}$ column of $M$).

The purpose of this Section is to quickly summarize and explain the main results from \cite{Minemyer3} to be used in this paper. 
Suppose a Euclidean $n$-simplex $(\t,g_{\E})$ with $\t = \la v_1, \hdots, v_{n+1} \ra$ is linearly isometrically embedded into $\R^m$ ($m \geq n$) endowed with some symmetric bilinear form $\la , \ra$.  
Translate the image of $v_{n+1}$ so that it is mapped to the origin, and by abuse of notation we associate each vertex $v_i$ with its image in $\R^m$.  
Define the vectors $w_i = v_i - v_{n+1}$ for $1 \leq i \leq n$.  
The collection $( w_1, \hdots, w_n )$ forms a basis for the smallest subspace of $\R^m$ containing $\t$, and the form $\la, \ra$ is completely determined on this subspace by the $n \times n$ matrix $Q$ whose $ij^{th}$ entry is defined by
    $$ q_{ij} = \la w_i, w_j \ra. $$
    
Now, notice that
    \begin{equation*}
    \g_{ij}^2 = \la w_i-w_j, w_i-w_j \ra = \g_{i,n+1}^2 + \g_{j,n+1}^2 - 2 \la w_i, w_j \ra 
    \end{equation*}
and so
    \begin{equation}\label{eqn:Q}
    q_{ij} = \la w_i, w_j \ra = \frac{1}{2} \left( \g_{i,n+1}^2 + \g_{j,n+1}^2 - \g_{ij}^2 \right).
    \end{equation}
This allows one to construct the matrix $Q$ using only the edge lengths of $(\t,g_\E)$.  
Then, theoretically, one should be able to calculate any geometric quantity of $\t$ using $Q$ since it completely determines the geometry of $(\t,g_\E)$.  

The main results from \cite{Minemyer3} which follow from the definition of $Q$ above are:
    \begin{enumerate}
    \item {\bf Realizability of $(\t,g_{\E})$:}  $n(n+1)/2$ positive real numbers $\{ \g_{ij} \}$ are the edge lengths of some Euclidean simplex $(\t, g_{\E})$ if and only if the matrix $Q$ is positive definite.  
    
    \item  {\bf Distances in $(\t,g_{\E})$:}  Let $x, y \in \t$ be such that $x$ has barycentric coordinates $(\a_i)_{i=1}^{n+1}$ and $y$ has barycentric coordinates $(\b_i)_{i=1}^{n+1}$, where we have $\sum_{i=1}^{n+1} \a_i = 1 = \sum_{i=1}^{n+1} \b_i$.  
Then the squared Euclidean distance between $x$ and $y$ can be calculated by the formula
\begin{equation}\label{euclid-dist}
    d_\E^2(x,y) = [x-y]^T Q [x-y]
\end{equation} 
where $[x-y]$ is the vector in $\R^m$ whose $i^{th}$ coordinate is $(\a_i - \b_i)$, with $1 \leq i \leq n$.
    
    \item  {\bf Volume of $(\t, g_{\E}):$}  The $n$-dimensional volume of $(\t, g_{\E})$ is given by
        \begin{equation*}
        \text{Vol}(\t) = \frac{\sqrt{\text{det}(Q)}}{n!}.
        \end{equation*}
    \end{enumerate}


\section{Determining the realizability of hyperbolic simplices}\label{section:realizability}

\subsection*{The hyperboloid model for hyperbolic space}  The goal of this research is to develop geometric formulas independent of the embedding of our hyperbolic simplex $(\t, g_\H)$ into hyperbolic space.  
But we will need to isometrically embed our hyperbolic simplex into some model space for hyperbolic space in order to prove that our formulas are correct.  
We will always use the hyperboloid model for hyperbolic space, and so we establish our notation for this now.  

We will use the notation $\R^{n,1}$ to denote the standard Minkowski space with signature $(n,1)$.  
That is, as a vector space, $\R^{n,1}$ is just $\R^{n+1}$ endowed with the symmetric bilinear form
    \begin{equation*}
    \la x,y \ra = x_1y_1 + x_2y_2 + \hdots + x_ny_n - x_{n+1}y_{n+1},  
    \end{equation*}
where $x = (x_1, \hdots, x_{n+1})$ and $y = (y_1, \hdots, y_{n+1})$.  
The solution set to the equation $\la x, x \ra = -1$ forms a two-sheeted hyperboloid, and the ``upper" sheet (the sheet with $x_{n+1} > 0$) is our model for $n$-dimensional hyperbolic space $\H^n$.  
Given two points $x, y \in \H^n$, the hyperbolic distance $d_\H(x,y)$ is given by
    \begin{equation}\label{eqn:hyperbolic distance}
    d_\H(x,y) = \arccosh(-\la x,y \ra) 
    \end{equation}
and is equivalent to the induced path-metric on $\H^n$.  

\subsection*{Induced flat simplices}
Let $(\t, g_\H)$ be an $n$-dimensional hyperbolic simplex, and assume that it is isometrically embedded in $\H^n$.  
Let $\t = \la v_1, \hdots, v_{n+1} \ra$, and by abuse of notation associate $v_i$ with its image in $\H^n$. 

Identifying $\H^n$ with the upper-half plane model described above, we can consider the convex hull of the vertices $v_1, \hdots, v_{n+1}$ in $\R^{n,1}$.  
This yields an $n$-dimensional simplex which we will call $\s$.  
Note that $\s$, endowed with the quadratic form inherited from $\R^{n,1}$, has curvature $0$.  

\begin{remark}\label{rmk:non positive definite}
Note that this form may or may not be positive-definite.  
For an easy example of where $(\t,g_\H)$ is a legitimate hyperbolic simplex but the quadratic form on $\s$ is not positive definite, consider the three points
    \begin{equation*}
    v_1 = (0,0,1) \qquad v_2 = (0,1,\sqrt{2}) \qquad v_3 = (0,2,\sqrt{5}).
    \end{equation*}
These three points lie on a line in $\H^2$, but their convex hull $\s$ is a triangle in $\R^{2,1}$.  
The plane containing $\s$ (the $yz$-plane) clearly has signature $(1,1)$.  
Now, perturb the point $v_3$ on the hyperboloid so that the three points are no longer colinear in $\H^2$.  
For a sufficiently small perturbation, this will provide a legitimate hyperbolic triangle where the quadratic form associated to the convex hull is not positive-definite. 
\end{remark}

In many of the formulas and arguments later in this paper, we will care about the $(n+1)$-dimensional simplex $\Sigma$ defined as follows.  
Assume we have a hyperbolic simplex $(\t, g_\H)$ isometrically embedded in $\H^n$, and define $\s$ as above.
Then $\Sigma := \{ \vec{0} \} \vee \s$.  
That is, $\Sigma = (v_0, v_1, \hdots, v_{n+1})$ where $v_0 = \vec{0}$ and $v_1, \hdots, v_{n+1}$ are the vertices of $\s$.  
So $\Sigma$ is just the $(n+1)$-dimensional simplex in $\R^{n,1}$ obtained by combining the origin with the vertices of $\s$.  

Using similar notation to Section \ref{section:Euclidean}, we can compute a simple formula for the matrix $Q_\Sigma$ associated to $\Sigma$.  
Let $w_i = v_i - v_0 = v_i$.  
The collection $( w_i )$ forms a basis for $\R^{n,1}$.  
With respect to this basis, the $ij^{th}$ entry of $Q_\Sigma$ is given by
    \begin{equation}\label{eqn:Qij}
    q_{ij} = \la w_i, w_j \ra = \la v_i, v_j \ra = -\cosh(\g_{ij})
    \end{equation}
where $\g_{ij} = d_\H(v_i, v_j)$ is the edge length of $\t$ determined by $g_\H$.  
Note that the last equality in equation \eqref{eqn:Qij} is obtained directly from equation \eqref{eqn:hyperbolic distance}, and that the diagonal entries of $Q_\Sigma$ are all $-1$.  
Lastly, observe that the matrix $Q_\Sigma$ can be constructed from $(\t,g_\H)$ without ever needing to isometrically embed $\t$ in $\H$.

\subsection*{Determining the realizability of hyperbolic simplices}
Let $\{ \g_{ij} \}_{i,j=1}^{n+1}$; $\g_{ij} = \g_{ji}$ be a set of positive real numbers. The purpose of this Subsection is to establish whether this set, when defined as the edge lengths of $(\t,g_\H)$, will form a legitimate simplex in hyperbolic space. 

\begin{theorem}\label{theorem : hyperbolic-realizability}
A set of $n(n+1)/2$ positive real numbers $\{\gamma_{ij}\}_{i,j=1, i < j}^{n+1}$ are the edge lengths of a non-degenerate hyperbolic simplex $(\t,g_\H)$ if and only if the matrix $Q_\Sigma$ defined by equation \eqref{eqn:Qij} has signature $(n,1)$.
\end{theorem}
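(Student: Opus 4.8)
The plan is to prove both directions by relating the signature of $Q_\Sigma$ to the geometric realizability of $(\t, g_\H)$ through the hyperboloid model. The key observation is equation \eqref{eqn:Qij}: the matrix $Q_\Sigma$ is precisely the Gram matrix of the vectors $v_1, \hdots, v_{n+1} \in \R^{n,1}$ with respect to the Minkowski form, where each $v_i$ would be a putative vertex lying on the upper hyperboloid. Since $Q_\Sigma$ is constructed purely from the edge lengths $\g_{ij}$ without reference to any embedding, the entire problem reduces to deciding when a Gram matrix of this prescribed form can be realized by genuine points on $\H^n$ spanning a nondegenerate simplex.

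For the forward direction, I would assume $(\t, g_\H)$ is a nondegenerate hyperbolic simplex and isometrically embed it in $\H^n \subset \R^{n,1}$. Then $w_i = v_i$ gives a basis for $\R^{n,1}$ (nondegeneracy of the simplex guarantees the $v_i$ are linearly independent, hence span the full $(n+1)$-dimensional Minkowski space), and $Q_\Sigma$ is literally the matrix of the ambient form $\la , \ra$ expressed in this basis. Since a change of basis is a congruence, Sylvester's law of inertia tells us $Q_\Sigma$ has the same signature as the Minkowski form itself, namely $(n,1)$.

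For the converse, I would start with abstract positive reals $\{\g_{ij}\}$, build $Q_\Sigma$ via \eqref{eqn:Qij}, and assume it has signature $(n,1)$. The goal is to manufacture an honest embedding. Because $Q_\Sigma$ has signature $(n,1)$, it is the Gram matrix (in some basis) of a nondegenerate symmetric bilinear form of signature $(n,1)$; by the classification of such forms there is a linear isomorphism $\R^{n+1} \to \R^{n,1}$ sending the standard basis to vectors $v_1, \hdots, v_{n+1}$ with $\la v_i, v_j \ra = -\cosh(\g_{ij})$. In particular $\la v_i, v_i \ra = -1$, so each $v_i$ lies on the two-sheeted hyperboloid. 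The remaining steps are to check that one can choose all $v_i$ on the same sheet (replacing $v_i$ by $-v_i$ if needed, using that $\la v_i, v_j \ra = -\cosh(\g_{ij}) < 0$ forces same-sheet consistency once one vertex is fixed), and then to verify that \eqref{eqn:hyperbolic distance} recovers $d_\H(v_i, v_j) = \g_{ij}$, so the embedded simplex genuinely has the prescribed edge lengths and is nondegenerate (its vertices span, since $Q_\Sigma$ is invertible).

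The main obstacle I anticipate is the same-sheet argument in the converse: the algebraic signature condition produces vectors on the hyperboloid, but a priori they could be distributed across both sheets, and points on opposite sheets do not satisfy the distance formula \eqref{eqn:hyperbolic distance} as stated. The fix should come from the sign structure $\la v_i, v_j \ra = -\cosh(\g_{ij}) < 0$: two future-pointing timelike vectors have negative inner product while a future and a past vector have positive inner product, so the uniform negativity of all off-diagonal entries forces a consistent choice of sheet. Pinning down this sign bookkeeping cleanly, and confirming that nondegeneracy of the simplex corresponds exactly to $Q_\Sigma$ being nonsingular of full signature rather than degenerate, is where I would spend the most care.
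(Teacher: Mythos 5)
Your proposal is correct and follows essentially the same route as the paper: realize $Q_\Sigma$ as the Gram matrix of a signature-$(n,1)$ bilinear form on $\R^{n+1}$, send each vertex $v_i$ to the corresponding basis vector (so all vertices lie on the hyperboloid $\la x, x \ra = -1$), and then rule out vertices landing on opposite sheets. The paper's same-sheet step is just the mirror image of the sign argument you anticipate: it observes that opposite-sheet vertices would make $v_i - v_j$ timelike, yet $\la v_i - v_j, v_i - v_j \ra = 2\cosh(\g_{ij}) - 2 \geq 0$, which is precisely the future/past inner-product dichotomy you invoke, so your ``bookkeeping'' concern resolves exactly as you expect.
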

    \begin{proof}
    If $(\t, g_\H)$ is a hyperbolic simplex, then from the discussion above it is clear that $Q_\Sigma$ will have signature $(n,1)$.  
    
    Conversely, assume that $Q_\Sigma$ has signature $(n,1)$.  Let $( \a_i )_{i=1}^{n+1}$ be a basis for $\R^{n+1}$, and define a symmetric bilinear form $\la, \ra$ on $\R^{n+1}$ by 
        \begin{equation*}
        \la \a_i, \a_j \ra = q_{ij}
        \end{equation*}
    for all $i, j$, and where $q_{ij}$ denotes the $ij^{th}$ entry of $Q_{\Sigma}$.  
    The matrix $Q_\Sigma$ is then the Gram matrix for $(\R^{n+1}, \la, \ra )$, and so the form $\la , \ra$ has signature $(n,1)$.  
    Therefore, $\R^{n+1}$ equipped with the form $\la, \ra$ is a model for Minkowski space $\R^{n,1}$.  
    
    The isometric embedding of $(\t, g_\H)$ into $(\R^{n+1}, \la, \ra)$ is just the map that sends $v_i$ to the terminal point of $\a_i$ for each $i$.  
    This map is a linear isometry by construction, and every vertex lies on the two-sheeted hyperboloid defined by the equation $\la x, x \ra = -1$.  
    The only remaining thing that needs to be checked is that each vertex is mapped to the same sheet of this hyperboloid.  
    But suppose the vertices $v_i$ and $v_j$ were mapped to opposite sheets of the hyperboloid.  
    Then the vector $v_i - v_j$ would be in the light cone, and therefore we would have that 
        \begin{equation*}
        \la v_i -v_j, v_i - v_j \ra < 0.
        \end{equation*}  
    Expanding the left-hand side of this inequality gives 
        \begin{equation*}
        \la v_i -v_j, v_i - v_j \ra = \la v_i, v_i \ra + \la v_j, v_j \ra - 2 \la v_i, v_j \ra = -1 -1 - 2q_{ij} = 2\cosh(\g_{ij}) - 2
        \end{equation*}
    which is always greater than or equal to 0.  
    \end{proof}

\section{Barycentric Coordinates in Simplices of Constant Curvature}\label{section:coordinates}
For this Section let $(\t,g)$ be a non-degenerate simplex with some constant curvature $\k$ possibly different from 0 or -1.
Linearly isometrically embed $\t$ into the model space $\R^{n,1}$, and by abuse of notation let $v_i$ denote the image of $v_i$ under this realization.  
As before, let $\s$ be the $n$-dimensional Euclidean simplex determined by the convex hull of $(v_1, \cdots, v_{n+1})$ in $\R^{n,1}$.

Let $p \in \s$ be given by the barycentric coordinates $(\a_1, \cdots, \a_{n+1})$, where $\sum_{i=1}^{n+1} \a_i = 1$.
The purpose of this Section is to define the correspond point $\tilde{p}$ in $\t$.  
The immediate idea is to project $p$ onto $\t$ from the origin.  
This point $\tilde{p}$ will then naturally depend on how $\t$ was embedded in $\R^{n,1}$.  
In what follows we will give a formula for how to compute $\tilde{p}$ using only the barycentric coordinates $(\a_i)_{i=1}^{n+1}$, the edge lengths $(\g_{ij})_{i,j=1}^{n+1}$, and the vectors $(v_i)_{i=1}^{n+1}$, therefore proving that $\tilde{p}$ is well-defined with respect to the location of the vertices $(v_i)_{i=1}^{n+1}$ in the model space.

Define $\tilde{p}$ by 
    \begin{equation}\label{eqn:curved-barycentric}
         \tilde{p} =  \begin{cases} 
      	\frac{p}{\k^2 \cdot \sqrt{\la p, p \ra }}  & \qquad \text{if } \k > 0 \\
      	p & \qquad \text{if } \k = 0  \\
      	\frac{p}{\k^2 \cdot \sqrt{-\la p, p \ra }} & \qquad \text{if } \k < 0
   	\end{cases}
    \end{equation}
Note that $\tilde{p}$ is in fact the projection (from the origin) of $p$ onto the model space with constant curvature $\k$.  
We see from the above definition of $\tilde{p}$ that it depends on $p$ and $\la p, p \ra$.  
Of course, $p$ is completely determined by its barycentric coordinates and the location of the vertices of $\t$.  
To see that the same is true of $\la p, p \ra$, we compute
    \begin{equation}\label{eqn:p-dot-p}
    \la p, p \ra = \left\la \sum_{i=1}^{n+1} \a_i v_i, \sum_{j=1}^{n+1} \a_j v_j \right\ra = \sum_{i, j=1}^{n+1} \a_i \a_j \la v_i, v_j \ra 
    \end{equation}
where
    \[ \la v_i, v_j \ra =  \begin{cases} 
      	\frac{1}{\sqrt{\k}} \cos( \g_{ij} )  & \qquad \text{if } \k > 0 \\
      	\frac{1}{2} \left( \g_{i,n+1}^2 + \g_{j,n+1}^2 - \g_{ij}^2 \right) & \qquad \text{if } \k = 0  \\
      	\frac{-1}{\sqrt{-\k}} \cosh( \g_{ij} ) & \qquad \text{if } \k < 0
   	\end{cases}
	\]
Thus, the value of $\la p, p \ra$ depends only on the the barycentric coordinates of $p$, as well as the edge lengths of $\t$ as defined by $g$.  
Then since $p = \sum_{i=1}^{n+1} \a_i v_i$ depends only on its barycentric coordinates and the location of the vertices $(v_i)_{i=1}^{n+1}$, we see that the definition of $\tilde{p} \in \t$ above is well-defined with respect to the location of the vertices of $\t$ and the metric $g$. Note that when referring to a specific point in $\t$, it is generally simpler to instead refer to the point's corresponding point in $\s$, so that will be the convention used in the coming sections, including the example.

\section{Distances in hyperbolic simplices}\label{section:distances}
Let $(\t, g_\H)$ be a hyperbolic simplex, and let $x, y \in \t$ with barycentric coordinates $x = (x_1, \hdots, x_{n+1})$ and $y = (y_1, \hdots, y_{n+1})$.  
The purpose of this Section is to give a simple algorithm to compute $d_\H(x,y)$, the hyperbolic distance between the points $x$ and $y$, using only the edge lengths $(\g_{ij})$ associated to $g$ and the barycentric coordinates of $x$ and $y$.  

First, define 
    \begin{equation*}
    \tilde{x} = \frac{x}{\sqrt{-\la x, x \ra}} \qquad \tilde{y} = \frac{y}{\sqrt{-\la y, y \ra}}
    \end{equation*}
where $\la x, x \ra$ and $\la y, y \ra$ can be easily calculated using the matrix $Q_{\Sigma}$ as described in equation \eqref{eqn:p-dot-p}.
Note that, if $\t$ were linearly isometrically embedded in the hyperboloid model, then $x$ and $y$ would lie on the convex hull $\s$ while $\tilde{x}$ and $\tilde{y}$ would denote the corresponding projections of $x$ and $y$ onto the hyperboloid (from the origin).  

From equation \eqref{eqn:hyperbolic distance} we know that the hyperbolic distance from $\tilde{x}$ to $\tilde{y}$ is $\arccosh(-\la \tilde{x}, \tilde{y} \ra )$.  
This quantity corresponds to $d_{\H}(x,y)$.  
That is, $d_{\H}(x,y) = \arccosh(-\la \tilde{x}, \tilde{y} \ra )$.  
We formally state this in the following Theorem.

\begin{theorem}\label{thm:hyp dist}
Let $(\t, g_\H)$ be a hyperbolic simplex, and let $x, y \in \t$ with barycentric coordinates $x = (x_1, \hdots, x_{n+1})$ and $y = (y_1, \hdots, y_{n+1})$. 
Then the hyperbolic distance between $x$ and $y$ is given by
    \begin{equation}\label{eqn: hyp dist formula}
    d_\H (x,y) = \arccosh (-\la \tilde{x}, \tilde{y} \ra) = \arccosh \left( \frac{ -\la x,y \ra }{\sqrt{\la x,x \ra \cdot \la y,y \ra }} \right).
    \end{equation}
\end{theorem}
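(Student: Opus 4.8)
The plan is to work with an isometric embedding of $(\t, g_\H)$ into the hyperboloid model $\H^n \subset \R^{n,1}$ set up in Section \ref{section:realizability}, and to reduce the statement to the ambient distance formula \eqref{eqn:hyperbolic distance}. The first step is to recall from Section \ref{section:coordinates} that, under such an embedding, the points of $\t$ prescribed by the barycentric coordinates $(x_i)$ and $(y_i)$ are represented in $\R^{n,1}$ by $x = \sum_i x_i v_i$ and $y = \sum_i y_i v_i$, which lie on the flat convex hull $\s$; and that the corresponding points of $\t$ actually lying on the hyperboloid are the radial projections $\tilde{x}$ and $\tilde{y}$, given by \eqref{eqn:curved-barycentric} specialized to $\k = -1$ (where the factor $\k^2$ equals $1$).

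The second step is to verify that $\tilde{x}$ and $\tilde{y}$ lie on the upper sheet of the hyperboloid, so that \eqref{eqn:hyperbolic distance} applies. Writing $x = \sum_i x_i v_i$ with $x_i \geq 0$ and $\sum_i x_i = 1$, and using $\la v_i, v_j \ra = -\cosh(\g_{ij}) \leq -1$ from \eqref{eqn:Qij}, bilinearity gives
    \begin{equation*}
    \la x, x \ra = \sum_{i,j} x_i x_j \la v_i, v_j \ra \leq -\sum_{i,j} x_i x_j = -\left( \sum_i x_i \right)^2 = -1,
    \end{equation*}
so $\la x, x \ra < 0$ and $\tilde{x}$ is well-defined. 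A direct computation then yields $\la \tilde{x}, \tilde{x} \ra = \la x, x \ra / (-\la x, x \ra) = -1$, and the last coordinate of $\tilde{x}$ is positive since it is a positive scalar multiple of the (positive) last coordinate of $x = \sum_i x_i v_i$. Hence $\tilde{x} \in \H^n$, and likewise $\tilde{y} \in \H^n$.

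With both points on the hyperboloid, the third step applies \eqref{eqn:hyperbolic distance} directly: $d_\H(x,y) = d_\H(\tilde{x}, \tilde{y}) = \arccosh(-\la \tilde{x}, \tilde{y} \ra)$, where the first equality is the identification of the points $x, y \in \t$ with their hyperboloid representatives from Section \ref{section:coordinates}. The final step is the purely algebraic simplification
    \begin{equation*}
    -\la \tilde{x}, \tilde{y} \ra = -\left\la \frac{x}{\sqrt{-\la x,x \ra}}, \frac{y}{\sqrt{-\la y,y \ra}} \right\ra = \frac{-\la x, y \ra}{\sqrt{\la x,x \ra \cdot \la y,y \ra}},
    \end{equation*}
using bilinearity together with $\sqrt{-\la x,x \ra}\,\sqrt{-\la y,y \ra} = \sqrt{\la x,x \ra \cdot \la y,y \ra}$, which holds because both inner products are negative.

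I expect the only genuine subtlety to be the identification underlying the first equality $d_\H(x,y) = d_\H(\tilde{x}, \tilde{y})$: one must be careful that the point of $\t$ named by the barycentric coordinates $(x_i)$ is the hyperboloid point $\tilde{x}$, and not the flat-simplex point $x$ on $\s$. This is precisely the content of the well-definedness construction in Section \ref{section:coordinates}. Everything else is the short verification that the radial projection lands on the correct sheet, together with the bilinear algebra above.
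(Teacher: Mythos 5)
Your proof is correct and takes essentially the same route as the paper: identify the points given by barycentric coordinates with their radial projections onto the hyperboloid, apply the ambient distance formula \eqref{eqn:hyperbolic distance}, and simplify the inner product algebraically. You are in fact somewhat more careful than the paper, whose proof is only the informal discussion preceding the theorem; your checks that $\la x,x \ra \leq -1$ (so the projection is well-defined) and that $\tilde{x}, \tilde{y}$ land on the upper sheet fill in details the paper leaves implicit.
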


Finally, note that the inner products in equation \eqref{eqn: hyp dist formula} are very easy to calculate using the matrix $Q_\Sigma$.  
If one defines
    \begin{equation*}
    \vec{x} = \begin{pmatrix}
    x_1 \\ x_2 \\ \vdots \\ x_{n+1}
    \end{pmatrix}
    \qquad
    \text{and}
    \qquad
    \vec{y} = \begin{pmatrix}
    y_1 \\ y_2 \\ \vdots \\ y_{n+1}
    \end{pmatrix}
    \end{equation*}
then
    \begin{equation*}
    \la x, y \ra = \vec{x}^T Q_{\Sigma} \vec{y} \qquad \la x,x \ra = \vec{x}^T Q_\Sigma \vec{x} \qquad \la y, y \ra = \vec{y}^T Q_{\Sigma} \vec{y}.
    \end{equation*}

\section{Orthogonal projection in Euclidean Simplices}\label{section:Euclidean proj}
For the remainder of this paper, we denote the sub-simplex generated by removing $v_i$ from $(\t,g)$ by $(\t_i,g)$. Likewise, the sub-simplex generated by removing $v_i$ and $v_j$ is denoted by $\t_{ij}$, etc. The natural question arises as to the barycentric coordinates of the orthogonal projections of some point $p \in \t$ onto one of these sub-simplices in $(\t,g_\E)$ and $(\t,g_\H)$. Define the matrix $Q_{\t_{n+1}}$ as the form from \cite{Minemyer3} for $(\t_{n+1},g_\E)$ (defined by equation \eqref{eqn:Q}). This Section will be focusing on $(\t,g_\E)$, and the following Section will discuss $(\t,g_\H)$.

We first note that in both cases we need only consider the projection of a vertex onto a sub-simplex, as for any other point $x \in \t$, we may subdivide $\t$ in a manner which makes $x$ into a vertex opposite the face we are projecting onto. Similarly, for projection onto a sub-simplex with $n-2$ or fewer vertices, we need only know how to project onto an $(n-1)$-face, as we may define a new simplex by removing the vertices that are not being projected and are not in the sub-simplex. The question, then, becomes one of optimization: we must minimize the distance from our projective vertex to the $(n-1)$-face. For notational purposes, we relabel our vertices so that we are always projecting $v_{n+1}$ onto the face $\t_{n+1}$. 
Let $p = \text{proj}_{\t_{n+1}}(v_{n+1})$.

To prove our formula in Theorem \ref{thm:Euclid Projection} for the barycentric coordinates of $p$, we first need the following Lemma.  
Recall our notation that for a square matrix $Q$, we denote its determinant by $|Q|$.  

\begin{lemma}\label{determinantEquality}
$|Q_\t|=d^2_\E(v_{n+1},p)|Q_{\t_{n+1}}|$.
\end{lemma}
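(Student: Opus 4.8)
The plan is to relate the two Gram determinants $|Q_\t|$ and $|Q_{\t_{n+1}}|$ by exploiting the orthogonal decomposition induced by the projection $p = \text{proj}_{\t_{n+1}}(v_{n+1})$. Recall that $Q_\t$ is the Gram matrix of the vectors $w_i = v_i - v_{n+1}$ for $1 \le i \le n$, while $Q_{\t_{n+1}}$ is the Gram matrix for the face $\t_{n+1} = \la v_1, \hdots, v_n \ra$, which we may take to be built from the vectors $u_i = v_i - v_n$ for $1 \le i \le n-1$ (or any convenient basepoint within that face). The geometric content is that the ``height'' vector $v_{n+1} - p$ is orthogonal to the $(n-1)$-plane spanned by $\t_{n+1}$, and its squared length is exactly $d_\E^2(v_{n+1}, p)$.

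First I would set up a basis for the subspace spanned by $\t$ that is adapted to this orthogonal splitting. Choose $n-1$ vectors spanning the direction space of the face $\t_{n+1}$, and adjoin the vector $h = v_{n+1} - p$, which by definition of orthogonal projection is perpendicular to every vector lying in $\t_{n+1}$. In this adapted basis the Gram matrix of $\t$ becomes block-diagonal: the $(n-1) \times (n-1)$ block is precisely (a Gram matrix congruent to) $Q_{\t_{n+1}}$, and the remaining $1 \times 1$ block is $\la h, h \ra = d_\E^2(v_{n+1}, p)$. Since determinants of Gram matrices are invariant under a change of basis only up to the squared determinant of the change-of-basis matrix, I would instead argue via the standard fact that the Gram determinant of a set of vectors equals the Gram determinant of the orthogonal-complement part times the Gram determinant of the projected part. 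Concretely, decompose each $w_i$ as a component in the plane of $\t_{n+1}$ plus a multiple of $h$; the Gram determinant then factors because the cross terms vanish by orthogonality.

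The cleanest route is to invoke the multiplicativity of Gram determinants under orthogonal direct sums: if $V = U \oplus U^\perp$ and we have spanning sets whose projections onto $U$ and $U^\perp$ are controlled, then $\det(\text{Gram}) = \det(\text{Gram}_U) \cdot \det(\text{Gram}_{U^\perp})$. Taking $U$ to be the direction space of $\t_{n+1}$ and $U^\perp$ (within the span of $\t$) to be the line through $h$, the $U$-part contributes $|Q_{\t_{n+1}}|$ and the $U^\perp$-part contributes $\la h, h \ra = d_\E^2(v_{n+1}, p)$, giving the claim. I would verify that the chosen basis of $\t$ projects onto a basis of $\t_{n+1}$ with determinant $1$ (or explicitly track any Jacobian factor and check it cancels), so that no extraneous constant appears.

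The main obstacle I anticipate is bookkeeping the basepoint and ensuring the change of basis that diagonalizes into blocks has determinant exactly $\pm 1$, so that $|Q_\t|$ really equals the product $|Q_{\t_{n+1}}| \cdot d_\E^2(v_{n+1}, p)$ with coefficient one and no stray factorials or sign. Since $Q_\t$ is built from $w_1, \hdots, w_n$ and the natural face basis differs by an integer (indeed unimodular) transformation together with the subtraction of the projection $p$, the transformation expressing $(w_1, \hdots, w_{n-1}, h)$ in terms of the face directions and $h$ is unipotent (upper triangular with ones on the diagonal, since $p$ is an affine combination of $v_1, \hdots, v_n$), hence has determinant $1$. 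Confirming this unipotence carefully is the crux; once it is in hand, the block-orthogonality argument delivers the identity immediately.
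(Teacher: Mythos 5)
Your proof is correct, but it follows a genuinely different route from the paper's. The paper's proof is a two-line volume argument: it combines the well-known cone formula $\text{Vol}(\t)=\frac{1}{n}\,\text{Vol}(\t_{n+1})\,d_\E(v_{n+1},p)$ with Theorem 4 of \cite{Minemyer3}, namely $\text{Vol}(\t)=\frac{1}{n!}\sqrt{|Q_\t|}$ and $\text{Vol}(\t_{n+1})=\frac{1}{(n-1)!}\sqrt{|Q_{\t_{n+1}}|}$, and then squares the resulting identity to get $|Q_\t|=d_\E^2(v_{n+1},p)\,|Q_{\t_{n+1}}|$. You instead argue purely with Gram matrices: pass from the basis $(w_1,\dots,w_n)$, $w_i=v_i-v_{n+1}$, to the adapted basis consisting of the face directions $u_i=v_i-v_n$ together with the height vector $h=v_{n+1}-p$; since $p$ is an affine combination of $v_1,\dots,v_n$ with coefficients summing to $1$, this change of basis is unimodular, so the Gram determinant is unchanged, and orthogonality of $h$ to the face makes the new Gram matrix block diagonal with blocks $Q_{\t_{n+1}}$ and $\la h,h\ra=d_\E^2(v_{n+1},p)$. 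One caveat on your wording: the change of basis need not be literally unipotent --- with the natural sign conventions its determinant comes out to $-1$ --- but this is harmless, since Gram determinants transform by the \emph{square} of the change-of-basis determinant, which is all your argument needs; the essential point, that the determinant is $\pm 1$ because $p$'s barycentric coefficients sum to $1$, is exactly right. As for what each approach buys: the paper's is shorter but imports two external facts (the base-times-height volume formula and the volume theorem of \cite{Minemyer3}); yours is self-contained linear algebra that never mentions volume, works verbatim for any positive-definite bilinear form, and in effect reproves the Gram-determinant recursion that underlies the volume formula the paper cites.
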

\begin{proof}
A well known result has that $\text{Vol}(\t)=\frac{\text{Vol}(\t_{n+1})d_\E(v_{n+1},p)}{n}$, but by Theorem 4 of \cite{Minemyer3}, \\$\text{Vol}(\t) = \frac{1}{n!}\sqrt{|Q_\t|}$, and $\text{Vol}(\t_{n+1})=\frac{1}{(n-1)!}\sqrt{|Q_{\t_{n+1}}|}$. So \[\text{Vol}(\t) = \frac{\text{Vol}(\t_{n+1})d_\E(v_{n+1},p)}{n} \iff \frac{1}{n!}\sqrt{|Q_\t|}=\frac{\sqrt{|Q_{\t_{n+1}}|}d_\E(v_{n+1},p)}{n!}\]
Solving for $|Q_\t|$ then, we obtain
\[
|Q_\t|=|Q_{\t_{n+1}}|d^2_\E(v_{n+1},p),
\]thus completing the proof.
\end{proof}

\begin{theorem}\label{thm:Euclid Projection}
Let $(\t,g_\E)$ be a Euclidean $n$-simplex with $\t = ( v_1,v_2,\dots,v_{n+1} )$. Let $\t_{n+1}$ be the $(n-1)$-face of $\t$ with vertices $( v_1,\dots,v_n )$. Then the barycentric coordinates of the orthogonal projection $p$ of $v_{n+1}$ onto $\t_{n+1}$ are given by:
\[
\a_i = \frac{\sum_{j=1}^n (-1)^{i+j}Q_{ij}} {|Q_{\t_1}|};\; 1\leq i \leq n.
\]
where $p = (\a_1, \dots, \a_{n},0)$, $Q = Q_\t$, and $Q_{ij}$ denotes the $ij^{th}$ minor of $Q$.
\end{theorem}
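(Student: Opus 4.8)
The plan is to compute the barycentric coordinates of $p$ by minimizing the squared distance from $v_{n+1}$ to a general point on the face $\t_{n+1}$. Write a generic point of $\t_{n+1}$ as $\sum_{i=1}^n \a_i v_i$ with $\sum_{i=1}^n \a_i = 1$, and recall from Section \ref{section:Euclidean} that squared Euclidean distances are computed via the form $Q = Q_\t$. The orthogonal projection $p$ is characterized by the condition that the vector $v_{n+1} - p$ is $\la,\ra$-orthogonal to every edge of the face, i.e. $\la v_{n+1} - p, v_i - v_j \ra = 0$ for all $i,j \le n$. First I would translate everything into the coordinate vectors used for $Q$: since $Q$ records $\la w_i, w_j\ra$ with $w_i = v_i - v_{n+1}$, the orthogonality conditions become a linear system in the unknowns $\a_1, \dots, \a_n$ together with the normalization $\sum \a_i = 1$.

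The cleanest route is to set this up as a constrained least-squares problem and apply Lagrange multipliers, or equivalently to directly solve the normal equations. The orthogonality of $v_{n+1} - p$ to the face, combined with $\sum_i \a_i = 1$, yields $n$ equations of the form $\sum_{j} \a_j q_{ij} = c$ for a common constant $c$ (the constant arising because the residual vector pairs equally against each $w_i$ up to the projection), plus the normalization. I expect this to reduce to a system whose solution is given by Cramer's rule, and the resulting determinantal ratios are exactly the cofactor sums $\sum_{j=1}^n (-1)^{i+j} Q_{ij}$ in the numerator. The denominator must then be identified with $|Q_{\t_1}|$; here I would invoke Lemma \ref{determinantEquality}, which relates $|Q_\t|$, the projection distance, and $|Q_{\t_{n+1}}|$, to convert between the determinant of the full form and the determinant of the face form and to fix the correct normalizing quantity. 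Care is needed with indexing, since the minors $Q_{ij}$ are taken in the full matrix $Q_\t$ while the denominator is a determinant of a face submatrix.

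The main obstacle I anticipate is bookkeeping with the cofactor expansion: showing that the Lagrange/Cramer solution, which naturally produces ratios of determinants of $(n+1)\times(n+1)$ or $n \times n$ matrices, collapses precisely to the stated signed minor sums $\sum_{j=1}^n (-1)^{i+j} Q_{ij}$ divided by $|Q_{\t_1}|$. This requires carefully matching the augmented matrix (with the normalization row/column) against the cofactor structure of $Q$, and verifying that the indices on $Q_{\t_1}$ versus $Q_{\t_{n+1}}$ are consistent with the relabeling convention in which $v_{n+1}$ is the projected vertex. A useful sanity check along the way is the constraint $\sum_{i=1}^n \a_i = 1$: summing the proposed formula over $i$ and confirming that $\sum_{i,j} (-1)^{i+j} Q_{ij} = |Q_{\t_1}|$ should fall out of a Laplace-type expansion, which both validates the formula and pins down the denominator. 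Finally, one should confirm the critical point is genuinely the minimizer, which follows from positive-definiteness of $Q$ guaranteed by the Euclidean realizability criterion in Section \ref{section:Euclidean}.
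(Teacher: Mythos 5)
Your proposal follows essentially the same route as the paper's proof: minimize $\vec{\a}^T Q \vec{\a}$ subject to $\sum_i \a_i = 1$ via Lagrange multipliers, observe that the stationarity conditions read $\sum_j q_{ij}\a_j = c$ for a common constant, solve by Cramer's rule/cofactors of $Q$, and invoke Lemma \ref{determinantEquality} to identify the denominator (the paper pins down the multiplier as $\l = 2d_\E^2(v_{n+1},p)$ by pairing the stationarity equation with $\vec{\a}$, exactly the normalization step you describe). Your sanity check $\sum_{i,j}(-1)^{i+j}Q_{ij} = |Q_{\t_{n+1}}|$ is in fact Corollary \ref{sum-determinant equality} of the paper, and your flag about the $Q_{\t_1}$ versus $Q_{\t_{n+1}}$ indexing is warranted --- the theorem's displayed formula and its proof use these two labels for the same face matrix under the paper's relabeling convention.
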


    \begin{proof}
    Linearly isometrically embed the Euclidean simplex $(\t, g_\E)$ into $\R^n$ in some way, and by abuse of notation identify each vertex $v_i$ with its image under this isometry.  
    Let $w_i = v_i - v_{n+1}$.  
    Then the collection $(w_1, \dots, w_n)$ forms a basis for $\R^n$.  
    
    We proceed via the method of Lagrange multipliers, and we seek to minimize $d_\E(v_{n+1},p)$ subject to the constraint $\a_1 + \a_2 + \a_3 + \cdots + \a_n=1$. Let $\Vec{\a} = \begin{bmatrix} \a_1 & \a_2 & \cdots & \a_n  \end{bmatrix}^T$. Then $d_\E^2(v_{n+1},p) = \Vec{\a}^TQ\Vec{\a}$ by Equation \eqref{euclid-dist}.  Calculating the distance function, then, we obtain that \[d_\E^2(v_{n+1},p)=\sum\limits_{i,j=1}^n q_{ij}\a_i\a_j,\] (where $q_{ij}$ denotes the $ij^{th}$ entry of $Q$) noting here that due to symmetry there are exactly two copies of each term where $i\neq j$. We now define our Lagrangian function: \[\mathscr{L}(\a_1,\a_2,\cdots,\a_n,\l) = \sum\limits_{i,j=1}^n q_{ij}\a_i\a_j - \l\left(\left(\sum\limits_{i=1}^n\a_i\right) -1\right)\]
    We now seek to optimize $\mathscr{L}$. Consider just one $\frac{\partial}{\partial\a_i}\mathscr{L}$. The nonconstant components of $\mathscr{L}$ with respect to $\a_i$ are \[2q_{i1}\a_1\a_i + 2q_{i2}\a_2\a_i + \cdots + q_{ii}\a_i^2 + \cdots + 2q_{in}\a_n\a_i - \l\a_i,\] so
    \[\frac{\partial}{\partial\a_i}\mathscr{L} = \left(\sum\limits_{j=1}^n  2q_{ij}\a_j\right) - \l \text{, and thus, the gradient of $\mathscr{L}$ is }
    \nabla\mathscr{L}=\begin{pmatrix}
    \left(\sum\limits_{j=1}^n  2q_{1j}\a_j\right) - \l\\
    \left(\sum\limits_{j=1}^n  2q_{2j}\a_j\right) - \l\\
    \vdots\\
    \left(\sum\limits_{j=1}^n  2q_{nj}\a_j\right) - \l
    \end{pmatrix}:=0.
    \]Adding across by the $n\times 1$ column vector with $\l$ as entries, we obtain
    \begin{equation}\label{lambda_equation}
    \begin{pmatrix}
    \sum\limits_{j=1}^n  2q_{1j}\a_j\\
    \sum\limits_{j=1}^n  2q_{2j}\a_j\\
    \vdots\\
    \sum\limits_{j=1}^n  2q_{nj}\a_j
    \end{pmatrix}=\begin{pmatrix}\l \\ \l \\ \vdots \\ \l\end{pmatrix}=\vec{\l}.\end{equation}
    From which we may factor out $\Vec{\a}$ and obtain
    \[\begin{pmatrix}
    2q_{11} & 2q_{12} & \cdots & 2q_{1n} \\
    2q_{12} & 2q_{22} & \cdots & 2q_{2n} \\
    \vdots & \vdots & \ddots & \vdots \\
    2q_{1n} & \cdots & \cdots & 2q_{nn} \\
    \end{pmatrix}\vec{\a}=\begin{pmatrix}\l \\ \l \\ \vdots \\ \l\end{pmatrix}.\] But clearly, this $n\times n$ matrix is $2Q$. Since $\t$ is a nondegenerate simplex, $Q$ is positive definite, and is thus invertible. Left multiplying by $Q^{-1}$, we obtain: $2\vec{\a}=Q^{-1}\vec{\l}.$ But $Q^{-1}=\frac{1}{|Q|}C$, where $C$ is the cofactor matrix of $Q$. Thus, 
    \[
    \vec{\a}=\frac{1}{2|Q|}C\vec{\l} = \frac{\l}{2|Q|}\begin{pmatrix}
    \sum_{i=1}^n (-1)^{i+1}Q_{1i}\\
    \sum_{i=1}^n (-1)^{i+2}Q_{2i}\\
    \vdots\\
    \sum_{i=1}^n (-1)^{i+n}Q_{ni}
    \end{pmatrix}
    \]Then by Lemma \ref{determinantEquality}, we obtain
    \[\vec{\a}=\frac{\l}{2|Q_{\t_{n+1}}|d^{2}_\E(v_{n+1},p)}\begin{pmatrix}
    \sum_{i=1}^n (-1)^{i+1}Q_{1i}\\
    \sum_{i=1}^n (-1)^{i+2}Q_{2i}\\
    \vdots\\
    \sum_{i=1}^n (-1)^{i+n}Q_{ni}
    \end{pmatrix}.\]
    Now let us inspect Equation \eqref{lambda_equation}.
    Left multiplying both sides by $\vec{\a}^T$, we obtain $2d^{2}(v_{n+1},p) = \l\left(\sum_{i=1}^n \a_i\right)$.
    But $\sum_{i=1}^n\a_i = 1$ by the constraint, so $\l = 2d_\E^2(v_{n+1},p)$. Thus, we finally have
    \[
    \vec{\a}=\begin{pmatrix}
    \frac{\sum_{i=1}^n (-1)^{i+1}Q_{1i}}{|Q_{\t_1}|}\\
    \frac{\sum_{i=1}^n (-1)^{i+2}Q_{2i}}{|Q_{\t_1}|}\\
    \vdots\\
    \frac{\sum_{i=1}^n (-1)^{i+n}Q_{ni}}{|Q_{\t_1}|}
    \end{pmatrix},\]as desired.
    
    \end{proof}
    
\begin{cor}\label{sum-determinant equality}
    $\sum\limits_{i,j=1}^n (-1)^{i+j}Q_{ij} = |Q_{\t_{n+1}}|$.
\end{cor}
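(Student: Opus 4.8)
The plan is to obtain this identity essentially for free from Theorem \ref{thm:Euclid Projection}, by exploiting the fact that the numbers $\a_i$ produced there are genuine barycentric coordinates and hence satisfy the constraint $\sum_{i=1}^n \a_i = 1$ that was imposed throughout the Lagrange-multiplier argument. In other words, I would not redo any determinant computation; I would simply read off the normalization that is already guaranteed by the construction of $p$.

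Concretely, Theorem \ref{thm:Euclid Projection} gives, for each $1 \leq i \leq n$,
\[
\a_i = \frac{\sum_{j=1}^n (-1)^{i+j} Q_{ij}}{|Q_{\t_{n+1}}|},
\]
where the denominator is the Gram determinant of the opposite face supplied by Lemma \ref{determinantEquality}. Summing this over $i$ from $1$ to $n$ and pulling the common denominator out of the sum yields
\[
1 = \sum_{i=1}^n \a_i = \frac{1}{|Q_{\t_{n+1}}|} \sum_{i,j=1}^n (-1)^{i+j} Q_{ij},
\]
and multiplying through by $|Q_{\t_{n+1}}|$ gives the claimed equality. The entire argument is therefore two lines once Theorem \ref{thm:Euclid Projection} is in hand.

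Since this is an immediate consequence, there is no real obstacle to overcome; the only point requiring care is bookkeeping. I would make explicit that the relation $\sum_{i=1}^n \a_i = 1$ used here is exactly the optimization constraint from the proof of Theorem \ref{thm:Euclid Projection}, so that no independent verification of the normalization is needed, and that the denominator appearing in that formula is indeed $|Q_{\t_{n+1}}|$ (as produced by combining the cofactor expression for $\vec{\a}$ with Lemma \ref{determinantEquality}), so that it cancels cleanly against the factor $1/|Q_{\t_{n+1}}|$ above. One could alternatively prove the identity directly as a statement about the symmetric positive-definite matrix $Q$, namely that $\vec{1}^{\,T}\, \mathrm{adj}(Q)\, \vec{1}$ equals the $(n-1)\times(n-1)$ Gram determinant of the face, but routing it through the already-established projection formula is by far the cleaner path.
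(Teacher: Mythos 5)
Your proposal is correct and is essentially identical to the paper's own proof: both sum the formula from Theorem \ref{thm:Euclid Projection} over $i$, invoke $\sum_{i=1}^n \a_i = 1$ from the definition of barycentric coordinates, and clear the denominator $|Q_{\t_{n+1}}|$. Nothing further is needed.
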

    \begin{proof}
    By Theorem \ref{thm:Euclid Projection} and the definition of barycentric coordinates in Euclidean simplices, we have:
    \[ 1 = \sum\limits_{i=1}^n \a_i =\sum\limits_{i=1}^n \frac{\sum_{j=1}^n (-1)^{i+j}Q_{ij}} {\text{det}(Q_{\t_{n+1}})}= \frac{1}{\text{det}(Q_{\t_{n+1}})}\sum\limits_{i,j=1}^n (-1)^{i+j}Q_{ij}.\] Thus, we have \begin{equation}\label{minor-det eqn}
    \sum\limits_{i,j=1}^n (-1)^{i+j}Q_{ij} = \text{det}(Q_{\t_{n+1}}).
    \end{equation}
    \end{proof}
    
\begin{cor}
    The barycentric coordinates of the orthogonal projection $p=(0,\a_1,\cdots,\a_n)$ of $v_{n+1}$ onto $\t_{n+1}$ are given by\[\a_i = \frac{\sum_{j=1}^{n}(-1)^{i+j}Q_{ij}}{\sum_{j,k=1}^{n}(-1)^{j+k}Q_{jk}}.\]
\end{cor}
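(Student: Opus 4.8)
The plan is to obtain this formula as an immediate consequence of the two results immediately preceding it, requiring no new geometric input. Theorem \ref{thm:Euclid Projection} already expresses each barycentric coordinate of the projection as
\[
\a_i = \frac{\sum_{j=1}^n (-1)^{i+j} Q_{ij}}{|Q_{\t_{n+1}}|},
\]
so the entire content of this corollary is simply to rewrite the denominator $|Q_{\t_{n+1}}|$ in the form of a signed sum of minors. No optimization, embedding, or volume argument needs to be repeated, since all of that work has already been carried out in the proof of the theorem.

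To rewrite the denominator I would invoke Corollary \ref{sum-determinant equality}, and in particular equation \eqref{minor-det eqn}, which states that
\[
\sum_{j,k=1}^n (-1)^{j+k} Q_{jk} = |Q_{\t_{n+1}}|.
\]
Substituting this expression for $|Q_{\t_{n+1}}|$ directly into the denominator of the formula from Theorem \ref{thm:Euclid Projection} yields
\[
\a_i = \frac{\sum_{j=1}^n (-1)^{i+j} Q_{ij}}{\sum_{j,k=1}^n (-1)^{j+k} Q_{jk}},
\]
which is exactly the claimed expression. In effect, this corollary just records the self-contained form of the projection coordinates in which one never needs to compute the face determinant separately, since it is recovered as the total of the minors appearing in the numerators.

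Because the argument is a single substitution, I do not expect any genuine obstacle. The only point that warrants a moment of care is bookkeeping with the indices: one should confirm that the determinant $|Q_{\t_{n+1}}|$ appearing in the denominator of Theorem \ref{thm:Euclid Projection} is the same quantity produced by Corollary \ref{sum-determinant equality}, and that the harmless relabeling of which coordinate is set to zero (here $p = (0, \a_1, \hdots, \a_n)$ rather than $(\a_1, \hdots, \a_n, 0)$) does not affect the summation ranges. Once that identification is made explicit, the replacement is immediate and the proof is complete.
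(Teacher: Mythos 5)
Your proposal is correct and follows exactly the paper's own proof, which dismisses the corollary as ``Immediate from Theorem \ref{thm:Euclid Projection} and Corollary \ref{sum-determinant equality}'' --- that is, the same single substitution of the identity $\sum_{j,k=1}^n (-1)^{j+k}Q_{jk} = |Q_{\t_{n+1}}|$ into the denominator of the projection formula. Your added remark about checking the index bookkeeping and the harmless relabeling of which coordinate is zero is a reasonable point of care but introduces no difference in substance.
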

    \begin{proof}
    Immediate from Theorem \ref{thm:Euclid Projection} and Corollary \ref{sum-determinant equality}.
    \end{proof}
    
\begin{cor}
    $\text{Vol}(\t_{n+1}) = \frac{1}{(n-1)!}\sqrt{\sum\limits_{i,j=1}^n (-1)^{i+j}Q_{ij}}$.
\end{cor}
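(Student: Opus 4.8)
The plan is to observe that this Corollary is an immediate consequence of two results already established: the volume formula for Euclidean simplices recorded in Section \ref{section:Euclidean}, and Corollary \ref{sum-determinant equality}.

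First I would apply the volume formula (item 3 of the summary of \cite{Minemyer3} in Section \ref{section:Euclidean}) directly to the face $\t_{n+1}$. Since $\t_{n+1}$ is an $(n-1)$-dimensional Euclidean simplex, its associated form $Q_{\t_{n+1}}$ is an $(n-1)\times(n-1)$ matrix, and the volume formula yields
\[
\text{Vol}(\t_{n+1}) = \frac{1}{(n-1)!}\sqrt{|Q_{\t_{n+1}}|}.
\]
Next I would invoke Corollary \ref{sum-determinant equality}, specifically equation \eqref{minor-det eqn}, which asserts that $\sum_{i,j=1}^n (-1)^{i+j}Q_{ij} = |Q_{\t_{n+1}}|$. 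Substituting this identity under the radical gives the claimed expression, completing the argument.

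I expect no genuine obstacle here, as all of the substantive work was carried out in proving \eqref{minor-det eqn}. The single point meriting care is notational rather than mathematical: the minors $Q_{ij}$ appearing in the statement are minors of the full matrix $Q = Q_\t$ (an $n\times n$ matrix), not minors of $Q_{\t_{n+1}}$ itself. The content being used is therefore precisely that the alternating sum of these minors of $Q_\t$ recovers $\det(Q_{\t_{n+1}})$, which is exactly what Corollary \ref{sum-determinant equality} supplies. Once that identification is made explicit, the two displayed equalities chain together and the result follows at once.
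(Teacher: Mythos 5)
Your proposal is correct and matches the paper's own proof essentially verbatim: both apply the volume formula from \cite{Minemyer3} to the face $\t_{n+1}$ and then substitute the identity of Corollary \ref{sum-determinant equality} under the radical. Your added remark that the minors $Q_{ij}$ are taken from $Q_\t$ rather than $Q_{\t_{n+1}}$ is a fine clarification but introduces nothing beyond what the paper does.
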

    \begin{proof}
    Theorem 4 of \cite{Minemyer3} shows that $\text{Vol}(\t_{n+1})=\frac{1}{(n-1)!}\sqrt{|Q_{\t_{n+1}}|}$. Substituting Equation \eqref{minor-det eqn} for the radical, we obtain
    \[ \text{Vol}(\t_{n+1}) = \frac{1}{(n-1)!}\sqrt{\sum\limits_{i,j=1}^n (-1)^{i+j}Q_{ij}}, \] thus completing the proof.
    \end{proof}

\section{Orthogonal Projection in Hyperbolic Simplices}\label{section:hyperbolic proj}

 We now turn our attention to orthogonal projection in hyperbolic simplices.
 In this Section we are going to project the vertex $v_1$ onto the face $\t_1$, as opposed to last Section where we projected $v_{n+1}$ onto $\t_{n+1}$.  
 This reason for this change in notation is purely to make labeling subscripts easier.
 
 A first thought may be to just project within the convex hull $\s$ of the vertices of $\t$, and then poject this point onto the hyperboloid.  But, in general, this does not work.  The reason for this can be found in Remark \ref{rmk:non positive definite}.  The quadratic form in $\R^{n,1}$ when restricted to the hyperplane containing the vertices of $\t$ may not be positive definite.  In that case, the procedure in Theorem \ref{thm:Euclid Projection} may not work.  

\begin{theorem}\label{theorem:hyperbolic-projection}
Let $(\t,g_\H)$ be a hyperbolic $n$-simplex with vertices $( v_1,v_2,\cdots,v_{n+1} )$. Then the barycentric coordinates of the orthogonal projection $p \in \t$ of $v_1$ onto $\t_1$ is given by  
    \begin{equation}\label{eqn:hyperbolic proj coords}
    \a_i = \frac{(-1)^{i+1}Q^\Sigma_{1i}}{\sum_{j=2}^{n+1}(-1)^{1+j}Q_{1j}^\Sigma}.
    \end{equation}
where $p = (0,\a_2,\a_3,\cdots,\a_{n+1})$, $Q_\Sigma$ is defined as in equation \eqref{eqn:Qij}, and $Q_{ij}^\Sigma$ denotes the $ij^{th}$ minor of $Q_{\Sigma}$.  
\end{theorem}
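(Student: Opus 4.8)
The plan is to work in the hyperboloid model. Linearly isometrically embed $(\t, g_\H)$ into $\R^{n,1}$ as in Section~\ref{section:realizability}, identify each $v_i$ with its image, and recall that $Q_\Sigma$ is the Gram matrix with entries $q_{ij} = \la v_i, v_j\ra = -\cosh(\g_{ij})$. A point of the face $\t_1$ has barycentric coordinates $(0, \a_2, \dots, \a_{n+1})$ with $\sum_{i=2}^{n+1}\a_i = 1$, corresponds to $p = \sum_{i=2}^{n+1}\a_i v_i$ on the convex hull $\s$, and projects to the hyperboloid point $\tilde p = p/\sqrt{-\la p, p\ra}$ (Section~\ref{section:coordinates}). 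By equation~\eqref{eqn:hyperbolic distance}, $d_\H(v_1, \tilde p) = \arccosh(-\la v_1, \tilde p\ra)$, and since $\arccosh$ is increasing while $\la v_1, \tilde p\ra \le -1$, minimizing the distance is equivalent to maximizing $\la v_1, \tilde p\ra$. Writing $\vec\a = (\a_2, \dots, \a_{n+1})^T$, $\vec b = (q_{12}, \dots, q_{1,n+1})^T$, and letting $Q'$ denote the principal submatrix of $Q_\Sigma$ on the indices $2, \dots, n+1$ (which is exactly the matrix $Q_\Sigma$ of the sub-simplex $\t_1$), we have $\la v_1, p\ra = \vec b^T \vec\a$ and $\la p, p\ra = \vec\a^T Q'\vec\a$, so the objective becomes $f(\vec\a) = \vec b^T\vec\a \big/ \sqrt{-\vec\a^T Q'\vec\a}$.

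The objective $f$ is homogeneous of degree $0$ in $\vec\a$, so on the constraint slice $\sum\a_i = 1$ the Lagrange condition $\nabla f = \mu\,\vec 1$ forces $\mu = 0$: pairing with $\vec\a$ and using Euler's identity $\nabla f \cdot \vec\a = 0$ together with $\vec 1\cdot\vec\a = 1$ gives $\mu = 0$. Hence the optimum satisfies $\nabla f = 0$, and a direct computation of this equation produces the linear system $\vec b = \lambda\, Q'\vec\a$ with $\lambda = \la v_1, p\ra / \la p, p\ra$. Equivalently, this is the geometric statement that the geodesic from $v_1$ to $p$ meets $\t_1$ orthogonally, i.e. $\la v_1, w\ra = \lambda\la p, w\ra$ for every $w$ in the linear span of the face. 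Because $\t_1$ is a non-degenerate hyperbolic $(n-1)$-simplex, $Q'$ has signature $(n-1,1)$ by Theorem~\ref{theorem : hyperbolic-realizability} and is in particular invertible, so $\vec\a = \lambda^{-1}(Q')^{-1}\vec b$.

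Next I would solve this system by Cramer's rule: $\a_i = \lambda^{-1}\det\big(Q'_{i\to\vec b}\big)\big/|Q'|$, where $Q'_{i\to\vec b}$ is $Q'$ with its column indexed $i$ replaced by $\vec b$. The key identity is $\det\big(Q'_{i\to\vec b}\big) = (-1)^{i}\,Q^\Sigma_{1i}$: by symmetry of $Q_\Sigma$ the vector $\vec b$ is precisely the first column of $Q_\Sigma$ restricted to rows $2,\dots,n+1$, so the minor $Q^\Sigma_{1i}$ (delete row $1$ and column $i$) is obtained from $Q'_{i\to\vec b}$ by moving the $\vec b$-column into the leading position, which costs $i-2$ adjacent transpositions and hence the sign $(-1)^{i-2} = (-1)^{i}$. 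Substituting and using the constraint $\sum_{i=2}^{n+1}\a_i = 1$ to cancel the common factor $\lambda^{-1}/|Q'|$ yields $\a_i = (-1)^i Q^\Sigma_{1i}\big/\sum_{j=2}^{n+1}(-1)^j Q^\Sigma_{1j}$, which is exactly equation~\eqref{eqn:hyperbolic proj coords} since the extra factor $(-1)$ appearing in the stated numerator and denominator cancels in the ratio.

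The main obstacle is the sign and index bookkeeping in the determinant identity $\det(Q'_{i\to\vec b}) = (-1)^i Q^\Sigma_{1i}$: one must track the transposition count carefully and confirm it is consistent with the $(-1)^{i+1}$ and $(-1)^{1+j}$ factors in equation~\eqref{eqn:hyperbolic proj coords}, where the discrepancy harmlessly cancels. A secondary point requiring care is justifying that the unique critical point is genuinely the minimizing foot of the perpendicular, which amounts to the existence and uniqueness of orthogonal projection onto a totally geodesic subspace of $\H^n$ together with convexity of the distance function, and verifying the invertibility and signature of $Q'$ via Theorem~\ref{theorem : hyperbolic-realizability}.
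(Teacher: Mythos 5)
Your proposal is correct and follows essentially the same route as the paper: a Lagrange-multiplier argument maximizing $\la v_1,\tilde p\ra$ in which the multiplier is shown to vanish (your Euler-identity/homogeneity argument is exactly the paper's step of summing $\a_i$ times the equations), followed by solving the resulting linear system via Cramer's rule and normalizing with $\sum_{i=2}^{n+1}\a_i=1$. The only cosmetic difference is packaging: the paper isolates the Cramer's-rule computation in Lemma \ref{lemma:orth-complement}, identifying $\bigcap_{i\geq 2}v_i^\perp$ with the span of the signed minors $(-1)^{i+1}Q^\Sigma_{1i}$, whereas you apply Cramer directly to the normal equations $\vec b=\lambda Q'\vec\a$; the sign bookkeeping you flag works out exactly as you describe, and your appeal to Theorem \ref{theorem : hyperbolic-realizability} for the invertibility of $Q'$ matches the paper's implicit use of non-degeneracy.
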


\begin{remark}
    Suppose the hyperbolic simplex $(\t, g_\H)$ is linearly isometrically embedded in the hyperboloid model for $\H^n$.  The point $p$ described in Theorem \ref{theorem:hyperbolic-projection} would lie in the convex hull $\s$ of $\t$ (it would actually lie on the covex hull of the points $(v_2, \dots, v_{n+1})$).  
    To find the actual point $\tilde{p}$ that is the orthogonal projection of $v_1$ onto $\t_1$, you would need to project $p$ onto the hyperboloid from the origin.  To do this, recall that $\tilde{p} = p/\sqrt{- \la p, p \ra }$.  
    A formula for the components of $\tilde{p}$ is given by
    \[\tilde{p}=(0,\tilde{\a}_2,\tilde{\a}_3,\cdots,\tilde{\a}_{n+1}),\text{ where  }\tilde{\a}_i = \frac{(-1)^{i+1}Q^\Sigma_{1i}}{\sqrt{\sum_{i,j=1}^{n+1}(-1)^{i+j}Q^\Sigma_{1i}Q^\Sigma_{1j}(\cosh(\g_{ij}))}},\]
    In practice though, it is easier to calculate $\a_i$ using equation \eqref{eqn:hyperbolic proj coords}, calculating $\sqrt{- \la p , p \ra}$ using $Q_\Sigma$, and then dividing.  
\end{remark}

\begin{remark}
    Note that the formula for calculating the orthogonal projection in Theorem \ref{thm:Euclid Projection} uses the $n \times n$ matrix $Q_\t$ defined in equation \eqref{eqn:Q}, whereas the formula in Theorem \ref{theorem:hyperbolic-projection} uses the $(n+1) \times (n+1)$ matrix $Q_\Sigma$ defined in \eqref{eqn:Qij}.  
    This difference is why it is notationally easier to project $v_{n+1}$ onto $\t_{n+1}$ in Theorem \ref{thm:Euclid Projection} and $v_1$ onto $\t_1$ in Theorem \ref{theorem:hyperbolic-projection}.
\end{remark}

    \begin{proof}[Proof of Theorem \ref{theorem:hyperbolic-projection}]
    Linearly isometrically embed the hyperbolic simplex $(\t, g_\H)$ into the hyperboloid model for $\H^n$.  
    By abuse of notation, identify each vertex $v_i$ with its image in the hyperboloid.  
    Let $\s$ denote the convex hull of the vertices $(v_1, \dots, v_{n+1})$, and let $\Sigma$ be the $(n+1)$-simplex obtained as the convex hull of $(v_0, v_1, \dots, v_{n+1})$ where $v_0$ is the origin in $\R^{n,1}$.  
    
    Just as in the proof of Theorem \ref{thm:Euclid Projection} we proceed via the method of Lagrange Multipliers. Let $p = \a_2v_2 + \a_3v_3 + \cdots + \a_{n+1}v_{n+1}$ be the point on $\s$ corresponding to the projection $\tilde{p}$ of $v_1$ onto $\t_1$. Then $\tilde{p} = \frac{p}{\sqrt{-\la p,p \ra}}$ by Equation \eqref{eqn:curved-barycentric}. 
    The hyperbolic distance between $v_1$ and $\tilde{p}$ is given by
        \begin{equation*}
        d_\H(v_1, \tilde{p}) = \arccosh(- \la v_1, \tilde{p} \ra).
        \end{equation*}
    Since $\arccosh()$ is increasing for arguments greater than $1$, our goal is to maximize $\la v_1,\tilde{p}\ra$ subject to the constraint $\a_2+\a_3+\cdots + \a_{n+1}=1$.
    
    Define our Lagrangian function $\mathscr{L}(\a_2,\cdots,\a_{n+1},\l):=\la v_1,\tilde{p} \ra -\l(\a_2+\dots+\a_{n+1}-1)$. Consider just one $\frac{\partial}{\partial \a_i}\mathscr{L}:$
    
    \[
    \frac{\partial}{\partial \a_i}\mathscr{L} = \la v_1,\frac{\partial}{\partial \a_i}\tilde{p}\ra - \l = \left\la v_1, \frac{-1}{2}(-\la p,p\ra)^{-3/2}(-2\la p,v_i\ra)p + (-\la p,p \ra)^{-1/2}v_i \right\ra
    -\l\]
    \[
    = \frac{\la p,v_i\ra}{(-\la p,p \ra)^{3/2}}\la v_1,p \ra + \frac{\la v_1,v_i \ra}{(-\la p,p\ra)^{1/2}} - \l := 0
    \] 
    By adding over the $\l$ and clearing denominators, we obtain
    \[
    \frac{\la p,v_i\ra}{-\la p,p \ra}\la v_1,p \ra + \la v_1,v_i \ra = \l(-\la p,p \ra)^{1/2}
    \]
    And thus our system of equations can be written as
    \begin{align}
    \frac{\la p,v_2\ra}{-\la p,p \ra}\la v_1,p \ra + \la v_1,v_2 \ra &=  \l(-\la p,p \ra)^{1/2}  \label{eqn:2} \\
    \frac{\la p,v_3\ra}{-\la p,p \ra}\la v_1,p \ra + \la v_1,v_3 \ra &=  \l(-\la p,p \ra)^{1/2} \label{eqn:3} \\
    \vdots \notag \\
    \frac{\la p,v_{n+1}\ra}{-\la p,p \ra}\la v_1,p \ra + \la v_1,v_{n+1} \ra &=  \l(-\la p,p \ra)^{1/2}  \label{eqn:n+1}
    \end{align}
    The first step to solving this system of equations is to show that $\l = 0$.  
    To do this, we take $\a_2 \eqref{eqn:2} + \a_3 \eqref{eqn:3} + \dots + \a_{n+1} \eqref{eqn:n+1}$.
    Recalling the constraint $\sum_{i=2}^{n+1}\a_i=1$, we obtain:
    
    \[
    \text{Right-Hand Side:} \quad \sum_2^{n+1}\a_i\l(-\la p,p\ra)^{1/2}=\l(-\la p,p\ra)^{1/2}
    \]\[
    \text{Left-Hand Side:} \quad \sum_{2}^{n+1}\a_i\left(\frac{\la p,v_i\ra}{-\la p,p \ra}\la v_1,p \ra + \la v_1,v_i \ra\right)\]\[ = \sum_{i=2}^{n+1}\frac{\la p,\a_iv_i \ra}{-\la p,p \ra}\la v_1,p\ra + \la v_1, \a_iv_i\ra = \frac{\la p,p\ra}{-\la p,p\ra}\la v_1,p\ra+\la v_1,p\ra = 0
    \] And thus since $(-\la p,p \ra)^{1/2}\neq 0$, we must have that $\l = 0$. Our system of equations becomes
    \[
    \begin{pmatrix}
    \frac{\la p,v_2\ra}{-\la p,p \ra}\la v_1,p \ra + \la v_1,v_2 \ra\\
    \frac{\la p,v_3\ra}{-\la p,p \ra}\la v_1,p \ra + \la v_1,v_3 \ra\\
    \vdots\\
    \frac{\la p,v_{n+1}\ra}{-\la p,p \ra}\la v_1,p \ra + \la v_1,v_{n+1} \ra
    \end{pmatrix} = \vec{0}
    \] But $-\la p,p \ra = \left(\sqrt{-\la p,p \ra}\right)^2$, and so we may distribute up into our inner products to obtain
    
    \[\begin{pmatrix}
    \la \tilde{p},v_2 \ra \la v_1,\tilde{p} \ra + \la v_1,v_2 \ra\\
    \la \tilde{p},v_3 \ra \la v_1,\tilde{p} \ra + \la v_1,v_3 \ra\\
    \vdots\\
    \la \tilde{p},v_{n+1} \ra \la v_1,\tilde{p} \ra + \la v_1,v_{n+1} \ra
    \end{pmatrix} = \vec{0}
    \]
    Now we may rearrange using the linearity of the bilinear form to achieve
    \[\begin{pmatrix}
    \la \la v_1,\tilde{p}\ra \tilde{p} + v_1, v_2 \ra\\
    \la \la v_1,\tilde{p}\ra \tilde{p} + v_1, v_3 \ra\\
    \vdots\\
    \la \la v_1,\tilde{p}\ra \tilde{p} + v_1, v_{n+1} \ra
    \end{pmatrix}=\vec{0}
    \]
    Therefore we must have that $\la v_1,\tilde{p}\ra \tilde{p} + v_1 \in \bigcap_{i=2}^{n+1}v_i^\perp$. 
    This intersection is one-dimensional, and is spanned by the first column of $Q_{\Sigma}^{-1}$ (for a proof, see Lemma \ref{lemma:orth-complement} below).  
    Since $\tilde{p}$ has a $v_1$ component of 0, we have

    \[
    \la v_1,\tilde{p}\ra \tilde{p} + v_1 = \begin{pmatrix}
    1\\
    \frac{-Q^\Sigma_{12}}{Q^\Sigma_{11}}\\
    \vdots\\
    \frac{(-1)^{(n+1)+1}Q^\Sigma_{1(n+1)}}{Q^\Sigma_{11}}
    \end{pmatrix},
    \text{ which then implies that } 
    \la v_1,\tilde{p}\ra \tilde{p}=\begin{pmatrix}
    0\\
    \frac{-Q^\Sigma_{12}}{Q^\Sigma_{11}}\\
    \vdots\\
    \frac{(-1)^{(n+1)+1}Q^\Sigma_{1(n+1)}}{Q^\Sigma_{11}}\end{pmatrix}.
    \] Factoring $(-\la p,p \ra)^{1/2}$ from each of the $\tilde{p}$'s and dividing by $\frac{\la v_1,p\ra}{-\la p,p \ra}$, we have
    
    \begin{equation}\label{eqn:p-vector}
    p = 
    \begin{pmatrix}
    0\\
    \frac{Q^\Sigma_{12}\la p,p \ra}{Q^\Sigma_{11}\la v_1,p\ra}\\
    \vdots\\
    \frac{(-1)^{n+3}Q^\Sigma_{1(n+1)}\la p,p \ra}{Q^\Sigma_{11}\la v_1,p\ra}
    \end{pmatrix}.
    \end{equation}
    Recall that $p = (0, \a_2, \dots, \a_{n+1})$.  Thus we obtain a preliminary solution for each $\a_i$ by aligning the components of equation \eqref{eqn:p-vector}.  But this is not a sufficient solution since $p$ is needed to compute each $\a_i$.  But consider the sum of these equations:
    
    \[
    1 =\sum_{i=2}^{n+1}\a_i = \frac{-\la p,p \ra}{\la v_1,p \ra}\frac{\sum_{i=2}^{n+1}(-1)^{i+1}Q^\Sigma_{1i}}{Q^\Sigma_{11}},
    \] and thus
    \begin{equation}\label{eqn:p-dot-p-constant}
    \frac{-\la p,p \ra}{\la v_1,p \ra}= \frac{Q^\Sigma_{11}}{\sum_{i=2}^{n+1}(-1)^{i+1}Q^\Sigma_{1i}}.
    \end{equation}
    Substituting into Equation \ref{eqn:p-vector}, we therefore have
    
    \[p = 
    \begin{pmatrix}
    0\\
    \frac{-Q^\Sigma_{12}}{\sum_{i=2}^{n+1}(-1)^{i+1}Q^\Sigma_{1i}}\\\
    \\
    \frac{Q^\Sigma_{13}}{\sum_{i=2}^{n+1}(-1)^{i+1}Q^\Sigma_{1i}}\\
    \vdots\\
    \frac{(-1)^{(n+1)+1}Q^\Sigma_{1(n+1)}}{\sum_{i=2}^{n+1}(-1)^{i+1}Q^\Sigma_{1i}}\\
    \end{pmatrix},
    \] and thus $\a_i = \frac{(-1)^{i+1}Q^\Sigma_{1i}}{\sum_{i=2}^{n+1}(-1)^{i+1}Q^\Sigma_{1i}}$, proving the Theorem. To calculate a formula for $\tilde{p} = \frac{p}{\sqrt{-\la p,p \ra}}$, we must calculate $\sqrt{-\la p,p \ra}$ with respect to $Q_\Sigma$:
    
    \[ \sqrt{-\la p,p \ra} = \sqrt{-\sum_{i,j=2}^{n+1} \a_i\a_j\la v_i,v_j \ra} =\sqrt{\frac{\sum_{i,j=2}^{n+1}(-1)^{i+j+4}Q^\Sigma_{1i}Q^\Sigma_{1j}(\cosh(\g_{ij}))}{\left(\sum_{i=2}^{n+1}(-1)^{i+1}Q^\Sigma_{1i}\right)^2}}\]
    \[ =\frac{\sqrt{\sum_{i,j=2}^{n+1}(-1)^{i+j}Q^\Sigma_{1i}Q^\Sigma_{1j}(\cosh(\g_{ij}))}}{\sum_{i=2}^{n+1}(-1)^{i+1}Q^\Sigma_{1i}}. \]
    Thus, we finally have that
    \[\tilde{p} = \frac{(\sum_{i=2}^{n+1}\a_iv_i)(\sum_{i=2}^{n+1}(-1)^{i+1}Q^\Sigma_{1i})}{\sqrt{\sum_{i,j=2}^{n+1}(-1)^{i+j}Q^\Sigma_{1i}Q^\Sigma_{1j}(\cosh(\g_{ij}))}} = \frac{\sum_{i=2}^{n+1}(-1)^{i+1}Q^\Sigma_{1i}v_i}{\sqrt{\sum_{i,j=2}^{n+1}(-1)^{i+j}Q^\Sigma_{1i}Q^\Sigma_{1j}(\cosh(\g_{ij}))}},\]
    and so $\tilde{\a}_i = \frac{(-1)^{i+1}Q^\Sigma_{1i}}{\sqrt{\sum_{i,j=2}^{n+1}(-1)^{i+j}Q^\Sigma_{1i}Q^\Sigma_{1j}(\cosh(\g_{ij}))}}$, as desired.
    \end{proof}

\begin{lemma}\label{lemma:orth-complement}
Using the notation in Theorem \ref{theorem:hyperbolic-projection}, the intersection of the orthogonal complements of the vertex vectors $v_i$ for $2\leq i \leq n+1$ is 
\[\bigcap_{i=2}^{n+1}v_i^\perp = \left\la\begin{pmatrix}
Q_{11}^\Sigma \\ -Q_{12}^\Sigma \\ \vdots \\ (-1)^{(n+1)+1}Q_{1(n+1)}^\Sigma\\
\end{pmatrix}\right\ra,\] where $\la \vec{v} \ra$ denotes the span of $\vec{v}$.
\end{lemma}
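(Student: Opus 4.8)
The plan is to translate the orthogonality conditions into a single linear-algebra statement about the Gram matrix $Q_\Sigma$ and then read off the spanning vector via Cramer's rule. Since $(v_i)_{i=1}^{n+1}$ is a basis for $\R^{n,1}$, I would work entirely in coordinates with respect to this basis: a vector $x = \sum_{k=1}^{n+1} c_k v_k$ is represented by its coordinate column $c = (c_1, \dots, c_{n+1})^T$, and the Gram matrix of the form in this basis is precisely $Q_\Sigma$.

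First I would compute, using bilinearity together with the symmetry of $Q_\Sigma$, that $\la x, v_i \ra = \sum_k c_k \la v_k, v_i \ra = (Q_\Sigma c)_i$. Hence the membership condition $x \in \bigcap_{i=2}^{n+1} v_i^\perp$ is equivalent to requiring the $i^{th}$ entry of $Q_\Sigma c$ to vanish for every $i \in \{2, \dots, n+1\}$; equivalently, $Q_\Sigma c = \l e_1$ for some scalar $\l$, where $e_1$ is the first standard basis vector.

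Next, because we are in the setting of Theorem \ref{theorem:hyperbolic-projection} where $(\t, g_\H)$ is realized in the hyperboloid model, Theorem \ref{theorem : hyperbolic-realizability} guarantees that $Q_\Sigma$ has signature $(n,1)$ and is therefore invertible. I would then solve the above as $c = \l Q_\Sigma^{-1} e_1$, so that the solution space is exactly the line spanned by the first column of $Q_\Sigma^{-1}$. Writing $Q_\Sigma^{-1} = \frac{1}{|Q_\Sigma|}\,\mathrm{adj}(Q_\Sigma)$ and recalling that the $(k,1)$ entry of the adjugate is the cofactor $C_{1k} = (-1)^{1+k} Q_{1k}^\Sigma$, the first column of $Q_\Sigma^{-1}$ is $\frac{1}{|Q_\Sigma|}$ times the vector claimed in the Lemma. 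Since scaling by the nonzero constant $1/|Q_\Sigma|$ does not change the span, this produces the stated generator.

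Finally I would confirm the dimension count: $\mathrm{span}(v_2, \dots, v_{n+1})$ is $n$-dimensional and, because the form is nondegenerate, its orthogonal complement has dimension $(n+1) - n = 1$, so the single vector above genuinely spans the entire intersection rather than a proper subspace. There is no serious obstacle here; the only point requiring care is the bookkeeping of minors versus cofactors, and the transpose implicit in the adjugate. This is painless because the symmetry of $Q_\Sigma$ makes its cofactor matrix symmetric, so the first column of the adjugate coincides with the cofactors $(-1)^{1+k} Q_{1k}^\Sigma$, matching the generator exactly.
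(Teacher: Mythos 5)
Your proof is correct, and it follows the same basic strategy as the paper's: pass to coordinates in the vertex basis so that orthogonality against the $v_i$ becomes a linear system governed by the Gram matrix $Q_\Sigma$, then solve by determinant formulas. The difference is in execution, and it works in your favor. The paper first asserts that $\bigcap_{i=2}^{n+1} v_i^\perp$ is one-dimensional and does not lie in the hyperplane $x_1 = 0$, normalizes $x_1 = 1$, and then applies Cramer's rule to the reduced $n \times n$ system whose coefficient matrix is the submatrix $Q_\Sigma(1,1)$; this implicitly requires $Q_{11}^\Sigma \neq 0$, and the two preliminary assertions are left unjustified. Your version sidesteps all of that: writing the membership condition as $Q_\Sigma c = \l e_1$ and invoking the invertibility of $Q_\Sigma$ (from the signature $(n,1)$ guaranteed by Theorem \ref{theorem : hyperbolic-realizability}) identifies the solution set exactly with the span of the first column of $Q_\Sigma^{-1}$, which the adjugate formula converts into the stated cofactor vector $\bigl(Q_{11}^\Sigma, -Q_{12}^\Sigma, \dots, (-1)^{n+2}Q_{1(n+1)}^\Sigma\bigr)^T$. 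In particular you never need $Q_{11}^\Sigma \neq 0$, and the one-dimensionality of the intersection is a conclusion of the argument rather than a hypothesis (your closing dimension count is redundant but harmless). The one bookkeeping point --- that the $(k,1)$ entry of $\mathrm{adj}(Q_\Sigma)$ is the cofactor $(-1)^{1+k}Q_{1k}^\Sigma$ --- you handle correctly, with the symmetry of $Q_\Sigma$ as a safety net.
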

    \begin{proof}
    We first note that $\bigcap_{i=2}^{n+1}v_i^\perp$ is a one dimensional vector space. Let $\vec{x}$ be an $(n+1)\times 1$ column vector in $\bigcap_{i=2}^{n+1}v_i^\perp$. We also note that since we are working with respect to the vertex vectors, \[\la v_i, \vec{x} \ra = \sum_{j=1}^{n+1} \la v_i, x_jv_j\ra = \sum_{j=1}^{n+1} x_j\la v_i,v_j\ra = \sum_{j=1}^{n+1} x_jq_{ij},\] where $x_j$ is the $j$-th entry of $\vec{x}$. Because $\bigcap_{i=2}^{n+1}v_i^\perp$ is one dimensional and does not lie on the hyperplane $x_1 = 0$, we may let $x_1=1$ without loss of generality (and this is the form of $x$ that was needed in Theorem \ref{theorem:hyperbolic-projection}). Then we must solve for $n$ unknowns in $n$ equations:
    \[
    \begin{pmatrix}
    q_{12} + q_{22}x_2 + q_{23}x_3 + \cdots + q_{2(n+1)}x_{n+1}\\
    q_{13} + q_{23}x_2 + q_{33}x_3 + \cdots + q_{3(n+1)}x_{n+1}\\
    \vdots\\
    q_{1(n+1)} + q_{2(n+1)}x_2 + q_{23}x_3 + \cdots + q_{(n+1)(n+1)}x_{n+1}\\
    \end{pmatrix}=\vec{0},
    \] which can be rearranged to:
    \[
    \begin{pmatrix}
    q_{22} & q_{23} & \cdots & q_{2(n+1)} \\
    q_{23} & q_{33} & \cdots & q_{3(n+1)} \\
    \vdots &\vdots& \ddots & \vdots \\
    q_{2(n+1)} & q_{3(n+1)} & \cdots & q_{(n+1)(n+1)} \\
    \end{pmatrix}
    \begin{pmatrix}
    x_2\\x_3\\\vdots\\x_{n+1}
    \end{pmatrix}=
    \begin{pmatrix}
    -q_{12}\\-q_{13}\\\vdots\\-q_{1(n+1)}
    \end{pmatrix}.
    \]
    First note that this matrix is the submatrix $Q_\Sigma(1,1)$ of $Q_\Sigma$ formed by removing the first row and first column. By Cramer's rule, $x_i = \frac{|A_{i-1}|}{Q^\Sigma_{11}}$, and $A_{i-1}$ is the matrix formed by replacing the $(i-1)$th column of $Q(1,1)$ with $\begin{bmatrix}
    -q_{12} & -q_{13} & \cdots & -q_{1(n+1)}
    \end{bmatrix}^T$. Thus, we have that
    \[\vec{x}=\begin{pmatrix}
    1\\
    \frac{-Q_{12}}{Q_{11}}\\
    \vdots\\
    \frac{(-1)^{n+1}Q_{1(n+1)}}{Q_{11}}
    \end{pmatrix}.\]
    And thus, since $\vec{x} \in \bigcap_{i=2}^{n+1}v_i^\perp$ and $\bigcap_{i=2}^{n+1}v_i^\perp$ is one dimensional, $\bigcap_{i=2}^{n+1}v_i^\perp = \la \vec{x} \ra$. Scaling $\vec{x}$ by $Q_{11}$, we have
    \[\bigcap_{i=2}^{n+1}v_i^\perp = \left\la\begin{pmatrix}
Q_{11} \\ -Q_{12} \\ \vdots \\ (-1)^{n+1}Q_{1(n+1)}\\
\end{pmatrix}\right\ra,\] as desired.
    \end{proof}

Symmetrically to the Euclidean Projection, in order to project onto an $n-2$ or smaller sub-simplex, we redefine a simplex that removes the irrelevant vertices and perform this calculation. As one can see, this calculation proves to be far simpler than other methods, and it only relies on the edge lengths of the simplex.

\section{An example}\label{section:example}

The formulas in our Theorems, specifically Theorems \ref{thm:Euclid Projection} and \ref{theorem:hyperbolic-projection}, look a lot more complicated to use than they are in practice.  
The purpose of this Section is to work out an example to demonstrate the efficiency of these formulas.  

Let $\t = \la v_1,v_2,v_3,v_4\ra$ be a 3-simplex with edge lengths given by:
\begin{tabular}{c|c|c|c|c}
    $\g_{ij}$ & 1 & 2 & 3 & 4 \\
    \hline
    1       & 0 & 2 & 3 & 4 \\
    \hline
    2       & 2 & 0 & 4 & 5 \\
    \hline
    3       & 3 & 4 & 0 & 3 \\
    \hline
    4       & 4 & 5 & 3 & 0 \\
\end{tabular}

In this section, we use the methods developed in this paper to compute various quanities in $\t$. We first verify that $\t$ is a non-degenerate simplex when given both the Euclidean metric $g_\E$ and the hyperbolic metric $g_\H$. We then follow this by finding the distances between $p = (\frac{1}{4},\frac{1}{4},\frac{1}{4},\frac{1}{4})$ and $q = (\frac{1}{3},\frac{1}{3},\frac{1}{3},0)$ in the Euclidean and Hyperbolic metric, and finally to find the orthogonal projection of $v_1$ onto $\t_1$ with $\t$ viewed as both Euclidean and Hyperbolic simplex.

\subsection{Calculations in $(\t,g_\E)$}

\subsection*{Verifying that $(\t, g_\E)$ is a legitimate Euclidean simplex}.  We construct the matrix $Q$ from \eqref{eqn:Q}.
\[ 
Q = 
\begin{pmatrix}
1/2(\g_{12}^2 +\g_{12}^2-\g_{22}^2) & 1/2(\g_{12}^2 +\g_{13}^2-\g_{23}^2) & 1/2(\g_{12}^2 +\g_{14}^2-\g_{24}^2) \\ \\
1/2(\g_{12}^2 +\g_{13}^2-\g_{23}^2) & 1/2(\g_{13}^2 +\g_{13}^2-\g_{33}^2) & 1/2(\g_{13}^2 +\g_{14}^2-\g_{34}^2) \\ \\
1/2(\g_{12}^2 +\g_{14}^2-\g_{24}^2) & 1/2(\g_{13}^2 +\g_{14}^2-\g_{34}^2) & 1/2(\g_{14}^2 +\g_{14}^2-\g_{44}^2)
\end{pmatrix} = 
\begin{pmatrix}
4 & -3/2 & -5/2 \\ \\
-3/2 & 9 & 8\\ \\
-5/2 & 8 & 16
\end{pmatrix}
\] 
\medskip

\noindent One can check that the eigenvalues of $Q$ are approximately 21.7, 3.81, and 3.48.  Then, since $Q$ is positive-definite, $\t$ is a legitimate Euclidean simplex.  

\subsection*{Calculating distances in $(\t, g_\E)$}  Let $p = (\frac{1}{4},\frac{1}{4},\frac{1}{4},\frac{1}{4})$ and $q=(\frac{1}{3},\frac{1}{3},\frac{1}{3},0)$. We wish to calculate $d_\E(p,q)$.  The squared Euclidean distance between them, by Equation \ref{euclid-dist}, is
\[d_\E^2(p,q) = [p-q]^TQ_E[p-q] = 
\begin{pmatrix}
\frac{1}{12} & \frac{1}{12} & \frac{-1}{4}
\end{pmatrix}\begin{pmatrix}
4 & -3/2 & -5/2 \\
-3/2 & 9 & 8\\
-5/2 & 8 & 16
\end{pmatrix}\begin{pmatrix}
\frac{1}{12}\\\frac{1}{12}\\\frac{-1}{4}
\end{pmatrix} = \frac{121}{144},\] so $d_E(p,q)=11/12$.

\subsection*{Projecting $v_1$ onto $\t_1$ in $(\t, g_\E)$}  Now, we find the projection of $v_1$ onto $\t_1$. Firstly, we have
\[
Q_{\t_1} = 
\begin{pmatrix}
1/2(\g_{23}^2 + \g_{23}^2-\g_{33}^2) & 1/2(\g_{23}^2 + \g_{24}^2-\g_{34}^2)\\
1/2(\g_{23}^2 + \g_{24}^2-\g_{34}^2) & 1/2(\g_{24}^2 + \g_{24}^2-\g_{44}^2)
\end{pmatrix} = \begin{pmatrix}
16 & 16 \\
16 & 25
\end{pmatrix}
\] So $|Q_{\t_1}|=144$. We now compute the minors of $Q$:

\[Q_{11}=
\begin{vmatrix}
9 & 8  \\
8 & 16
\end{vmatrix}=80
\qquad Q_{12}=
\begin{vmatrix}
-3/2 & 8  \\
-5/2 & 16
\end{vmatrix}=-4
\qquad Q_{13}=
\begin{vmatrix}
-3/2 & 9  \\
-5/2 & 8
\end{vmatrix}=21/2
\]\[ Q_{22}=
\begin{vmatrix}
4 & -5/2  \\
-5/2 & 16
\end{vmatrix}=231/4
\qquad Q_{23}=
\begin{vmatrix}
4 & -3/2  \\
-5/2 & 8
\end{vmatrix}=113/4
\qquad Q_{33}=
\begin{vmatrix}
4 & -3/2  \\
-3/2 & 9
\end{vmatrix}=135/4
\]
Thus, we can then find each $\alpha_i$, noting that $Q_{ij} = Q_{ji}$ due to the symmetry of $Q$:
\[
\alpha_2 = \frac{Q_{11} - Q_{12} + Q_{13}}{|Q_{\t_1}|} = \frac{80-(-4) + 21/2}{144} \approx 0.65625
\]\[
\alpha_3 = \frac{-Q_{21} + Q_{22} - Q_{23}}{|Q_{\t_1}|} = \frac{-(-4) + 231/4 - 113/4}{144} \approx 0.23264
\]\[
\alpha_4 = \frac{Q_{31}-Q_{32}+Q_{33}}{|Q_{\t_1}|} = \frac{21/2-113/4+135/4}{144} \approx 0.11111
\] Let us quickly remark that the subscripts of the $\a_i's$ are off by 1 from Theorem \ref{thm:Euclid Projection} since we are projecting $v_1$ as opposed to $v_4$ as in the Theorem.  

Note that $\alpha_2+\alpha_3+\alpha_4=1$. So the orthogonal projection of $v_1$ onto $\t_1$ has barycentric coordinates $(0,0.6525, 0.23264, 0.11111)$. It is worth noting that each coordinate is positive, and so the projection $p$ lies inside the triangle $\t_1$.  Our formula gives an easy way to check if a vertex projects inside or outside of the opposite face.  

\subsection*{Calculating the ``height" of $(\t, g_\E)$}
Consider the altitude of $v_1$ over $p$:
\[
d_\E(v_1,p) = \sqrt{p^TQp} \approx 1.4136.
\]
We will use this result to compare to the analogous result in the hyperbolic example.

\subsection{Calculations in $(\t,g_\H)$}

\subsection*{Verifying that $(\t, g_\H)$ is a legitimate hyperbolic simplex}  Via Theorem \ref{theorem : hyperbolic-realizability}, we need to calculate $Q_{\Sigma}$ using equation \eqref{eqn:Qij}.  We have that $Q_\Sigma$ =

\[
\begin{pmatrix}
-1 & -\cosh(\g_{12}) & -\cosh(\g_{13}) & -\cosh(\g_{14})\\
\\
-\cosh(\g_{12}) & -1 & -\cosh(\g_{23}) & -\cosh(\g_{24})\\
\\
-\cosh(\g_{13}) & -\cosh(\g_{23}) & -1 & -\cosh(\g_{34})\\
\\
-\cosh(\g_{14}) & -\cosh(\g_{24}) & -\cosh(\g_{34}) & -1
\end{pmatrix} = \begin{pmatrix}
-1 & -\cosh(2) & -\cosh(3) & -\cosh(4)\\
\\
-\cosh(2) & -1 & -\cosh(4) & -\cosh(5)\\
\\
-\cosh(3) & -\cosh(4) & -1 & -\cosh(3)\\
\\
-\cosh(4) & -\cosh(5) & -\cosh(3) & -1
\end{pmatrix}
\] The eigenvalues of $Q_\Sigma$ are approximately -90.1, 79.2, 5.5, and 1.4.  Since $Q_\Sigma$ has signature $(3,1)$, by Theorem 1 we know that $\t$ is a legitimate hyperbolic simplex.

\subsection*{Calculating distances in $(\t, g_\H)$}  We now wish to calculate $d_\H(p,q)$, where $p = (\frac{1}{4},\frac{1}{4},\frac{1}{4},\frac{1}{4})$ and $q=(\frac{1}{3},\frac{1}{3},\frac{1}{3},0)$. By Theorem \ref{thm:hyp dist}, we have
\[d_\H(p,q) = \arccosh\left(\frac{-\la p,q \ra}{\sqrt{\la p,p \ra \cdot \la q,q \ra}}\right),\]
with:
\[
\la p,q \ra = p^t(Q_\Sigma) q \approx -16.40517\;, \quad \la p,p \ra = p^t(Q_\Sigma) p \approx -19.34049, \text{  and}\quad \la q,q \ra = q^t(Q_\Sigma) q \approx -9.47513.
\]
Thus, we have that \[d_\H(p,q) = \arccosh \left(\frac{16.40516}{\sqrt{(-19.34049)(-9.47513)}}\right) = 0.63997.\]

\medskip

Notice here that $d_\H(p,q) < d_\E(p,q)$, as expected. 

\subsection*{Projecting $v_1$ onto $\t_1$ in $(\t, g_\H)$}
We now consider the projection $\tilde{p}$ of $v_1$ onto $\t_1$. First, we find the barycentric coordinates for the corresponding point $p = (0,\a_2,\a_3,\a_4)$ on the convex hull $\s$, before finding the coordinates of $\tilde{p}$. First, the relevant minors of $Q_\Sigma$ are:

\[
\qquad
Q_{12}^\Sigma = \begin{vmatrix}
-\cosh(2) & -\cosh(4) & -\cosh(5)\\
\\
-\cosh(3) & -1 & -\cosh(3)\\
\\
-\cosh(4) & -\cosh(3) & -1
\end{vmatrix}\approx -12350.57
\]

\medskip

\[
Q_{13}^\Sigma = \begin{vmatrix}
-\cosh(2) & -1 & -\cosh(5)\\
\\
-\cosh(3) & -\cosh(4) & -\cosh(3)\\
\\
-\cosh(4) & -\cosh(5) & -1
\end{vmatrix}\approx 2340.72 \quad 
Q_{14}^\Sigma = \begin{vmatrix}
-\cosh(2) & -1 & -\cosh(4)\\
\\
-\cosh(3) & -\cosh(4) & -1\\
\\
-\cosh(4) & -\cosh(5) & -\cosh(3)
\end{vmatrix}\approx -718.81
\]

\bigskip
\noindent From Theorem \ref{theorem:hyperbolic-projection}, we know that
    \begin{equation*}
    \a_2 = \frac{-Q_{12}}{-Q_{12}+Q_{13}-Q_{14}} \qquad \a_3 = \frac{Q_{13}}{-Q_{12}+Q_{13}-Q_{14}} \qquad \a_4 = \frac{-Q_{14}}{-Q_{12}+Q_{13}-Q_{14}}.  
    \end{equation*}
Plugging in the values for the minors and calculating, we get that $p = (0, 0.80146, 0.15190, 0.04665)$. Note that $\a_2+\a_3+\a_4 = 1$, as expected.
Also by Theorem \ref{theorem:hyperbolic-projection}, we have
\[
\tilde{p} = (0,\tilde{\a}_2,\tilde{\a}_3,\tilde{\a}_4);
\quad \text{for } \tilde{\a}_i = \frac{(-1)^{i+1}Q^\Sigma_{1i}}{\sqrt{\sum_{i,j=2}^{n+1}(-1)^{i+j}Q^\Sigma_{1i}Q^\Sigma_{1j}(\cosh(\g_{ij}))}}
\]And thus, since \[\sqrt{\sum_{i,j=2}^{n+1}(-1)^{i+j}Q^\Sigma_{1i}Q^\Sigma_{1j}(\cosh(\g_{ij}))} \approx 55578.499 \text{, we have  } \tilde{p} = \left(0,0.22222,0.04212, 0.01293  \right).\]  Alternatively, one could just calculate $\la p, p \ra$, and then use that $\tilde{p} = \frac{p}{\sqrt{- \la p, p \ra}}$.  

\subsection*{Calculating the ``height" of $(\t, g_\H)$}
Let us find the altitude of $v_1$ over $\tilde{p}$:
\[ d_\E^2(v_1,\tilde{p}) = \begin{pmatrix} -1 & 0.222 & 0.042 & 0.0129  \end{pmatrix}Q_\Sigma \begin{pmatrix} -1 \\ 0.222 \\ 0.042 \\ 0.0129 \end{pmatrix}\approx 1.22644 \]\[
d_\H(v_1,\tilde{p}) = \arccosh\left(\frac{2+1.22644}{2}\right) \approx 1.0575.
\]
Note that $d_\H(v_1,p) < d_\E(v_1,p)$, as we would expect.

\section{Analogous formulas for spherical simplices}\label{section:spherical}

Our model space for $\S^n$ is the unit sphere in $\R^{n+1}$.  Similar to equation \eqref{eqn:hyperbolic distance} we have
    \begin{equation}\label{eqn:spherical distance}
    d_\S(x,y) = \arccos(\la x,y \ra).
    \end{equation}
Also, in the same way as we did in Section \ref{section:realizability}, we can consider the $(n+1)$-simplex $\Sigma$ which is the convex hull of the vertices of a simplex and the origin.  The Gram matrix $Q_\Sigma$ is calculated by the formula
    \begin{equation}\label{eqn:spherical Qij}
    q_{ij} = \la w_i, w_j \ra = \la v_i, v_j \ra = \cos(\g_{ij}).
    \end{equation}

\begin{theorem}[Analogous to Theorem \ref{theorem : hyperbolic-realizability}]\label{theorem : spherical-realizability}
A collection of $n(n+1)/2$ positive real numbers $\{\gamma_{ij}\}_{i,j=1}^{n+1}$ with $\g_{ij} < \pi/2$ for all $i, j$ are the edge lengths of a spherical $n$-simplex $(\t, g_\mathbb{S})$ if and only if the $(n+1) \times (n+1)$ matrix $Q_\Sigma$ defined by equation \eqref{eqn:spherical Qij} is positive-definite.  
\end{theorem}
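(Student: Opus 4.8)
The plan is to mirror the proof of Theorem \ref{theorem : hyperbolic-realizability} exactly, using the spherical analogue of the hyperboloid argument. First I would embed the putative spherical simplex $(\t, g_\S)$ isometrically in the unit sphere $\S^n \subset \R^{n+1}$ and form the $(n+1)$-simplex $\Sigma = \{\vec 0\} \vee \s$ by adjoining the origin to the convex hull $\s$ of the vertices $v_1, \dots, v_{n+1}$. With $w_i = v_i - v_0 = v_i$, the Gram matrix $Q_\Sigma$ with entries $q_{ij} = \la v_i, v_j \ra = \cos(\g_{ij})$ (via equation \eqref{eqn:spherical distance}) is computed directly from the edge lengths, with diagonal entries all equal to $1$. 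The forward direction is immediate: if a genuine spherical simplex exists, then $Q_\Sigma$ is the Gram matrix of a basis of $\R^{n+1}$ with respect to the \emph{standard} (positive-definite) inner product, hence $Q_\Sigma$ is positive definite.

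For the converse, I would assume $Q_\Sigma$ is positive definite and run the reverse construction. Let $(\a_i)_{i=1}^{n+1}$ be a basis of $\R^{n+1}$ and define a symmetric bilinear form by $\la \a_i, \a_j \ra = q_{ij}$. Since $Q_\Sigma$ is positive definite, this form is an honest inner product, so $(\R^{n+1}, \la, \ra)$ is isometric to Euclidean $\R^{n+1}$, which contains the standard unit sphere as our model for $\S^n$. Sending each $v_i$ to the terminal point of $\a_i$ gives a linear isometry, and because $q_{ii} = \cos(0) = 1$, every vertex satisfies $\la v_i, v_i \ra = 1$ and therefore lands on the unit sphere. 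The final check is that the edge-length recovery is consistent: for $i \ne j$ one computes $d_\S(v_i, v_j) = \arccos(\la v_i, v_j \ra) = \arccos(\cos \g_{ij}) = \g_{ij}$, where the hypothesis $0 < \g_{ij} < \pi/2$ guarantees that $\arccos \circ \cos$ returns $\g_{ij}$ rather than a reflected value, so the assigned edge lengths are realized exactly.

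The main obstacle is the role of the constraint $\g_{ij} < \pi/2$. In the hyperbolic proof the only subtlety was ruling out vertices landing on opposite sheets of the two-sheeted hyperboloid; that dichotomy disappears here because the sphere is connected, so there is no analogous sheet argument to make. Instead the delicate point is that $\arccos$ is only a one-sided inverse of $\cos$ on $[0,\pi]$, and more importantly that the spherical distance function is bounded, so not every positive-definite Gram matrix with unit diagonal corresponds to a \emph{geodesically embedded} nondegenerate simplex with the prescribed edge lengths unless one controls the range of the angles. Restricting to $\g_{ij} < \pi/2$ keeps each $\cos(\g_{ij}) > 0$ and keeps every edge within a single geodesic coordinate patch, which is what makes the reconstruction unambiguous and guarantees nondegeneracy; I would verify that this hypothesis is exactly what is needed to invert the relation $q_{ij} = \cos(\g_{ij})$ and to ensure the resulting spherical simplex is genuinely $n$-dimensional. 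I expect this to be the only place where the argument genuinely diverges from the hyperbolic case, and I would spend care confirming that no weaker bound on the $\g_{ij}$ suffices.
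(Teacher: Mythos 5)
Your proposal is correct and takes essentially the same approach as the paper, whose own proof consists precisely of the remark that the argument is identical to the hyperbolic case (Gram matrix construction in both directions, with the two-sheet dichotomy becoming vacuous on the connected sphere, just as you observe). One small caveat: the inversion $\arccos(\cos\gamma_{ij}) = \gamma_{ij}$ holds for all $\gamma_{ij} \le \pi$, and positive definiteness of the Gram matrix is equivalent to linear independence of the unit vectors regardless of the sign of $\cos(\gamma_{ij})$, so the hypothesis $\gamma_{ij} < \pi/2$ is sufficient rather than ``exactly what is needed''---the careful check you propose at the end would in fact show that a weaker bound suffices.
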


    \begin{proof}
    This is essentially identical to the proof of Theorem \ref{theorem : hyperbolic-realizability}.

    \end{proof}
    
Distances in spherical simplices are computed in the analogous way as they are in hyperbolic simplices:  one projects the points onto the sphere using the techniques from Section \ref{section:coordinates}, and then calculates the distance using equation \eqref{eqn:spherical distance} and $Q_{\Sigma}$.  

For orthogonal projection in spherical simplices, you just project within the convex hull of the points and then project that point onto the sphere.  More precisely, let $(\t, g_\S)$ be a spherical simplex.  Linearly isometrically embed $\t$ into $\S^n$ in some way, and identify the vertices $v_i$ with their image in $\S^n$.  Let $\s$ be the $n$-simplex formed by the convex hull of $(v_1, \dots, v_{n+1})$.  Then, to calculate $\text{proj}_{\t_1}(v_1)$, you calculate $\text{proj}_{\s_1}(v_1)$ using Theorem \ref{thm:Euclid Projection} and then project this point onto the sphere from the origin.  
This process works for spherical simplices but not for hyperbolic simplices because the quadratic form restricted to the hyperplane containing $\s$ is always positive-definite for spherical simplices.  

Finally, the distance and projection formulas in this paper can be extended to simplices with constant curvature $\kappa$ by adjusting equations \eqref{eqn:hyperbolic distance}, \eqref{eqn:Qij}, \eqref{eqn:spherical distance}, and \eqref{eqn:spherical Qij} accordingly.

\bibliographystyle{amsplain}

\end{document}